\def\thmspace{0.2em}
\newtheorem{theorem}{\hspace{\thmspace}{\bf Theorem}\!}
\newtheorem{definition}{\hspace{\thmspace}{\bf Definition}\!}
\newtheorem{lemma}{\hspace{\thmspace}{\bf Lemma}\!}
\DeclareMathOperator*{\argmin}{argmin}
\newcommand{\cC}{\mathcal{C}}
\newcommand{\cD}{\mathcal{D}}
\newcommand{\cJ}{\mathcal{J}}
\newcommand{\cM}{\mathcal{M}}
\newcommand{\cN}{\mathcal{N}}
\newcommand{\cO}{\mathcal{O}}
\newcommand{\cP}{\mathcal{P}}
\newcommand{\cR}{\mathcal{R}}
\newcommand{\Lap}{{\text{Lap}}}
\newcommand{\etal}{\textit{et al.}}
\begin{document}
\title{Bilevel Optimization for Differentially Private Optimization in Energy Systems}
%
%
%

\author{Terrence W.K.~Mak, %
        Ferdinando~Fioretto, %
        and~Pascal Van~Hentenryck
      \thanks{T.W.K. Mak and P. Van Hentenryck 
	are affiliated with 
	the School of Industrial and Systems Engineering, 
	Georgia Institute of Technology, Atlanta, GA 30332. 
	F.~Fioretto is affiliated with the Electrical Engineering and Computer Science Department, Syracuse University, Syracuse, NY 13244, USA.
	E-mail contacts: wmak@gatech.edu, ffiorett@syr.edu, pvh@isye.gatech.edu.
	}}

\markboth{}%
{Mak \MakeLowercase{\textit{et al.}}}

\maketitle

\begin{abstract}
  This paper studies how to apply differential privacy to constrained
  optimization problems whose inputs are sensitive. This task raises
  significant challenges since random perturbations of the input data
  often render the constrained optimization problem infeasible or
  change significantly the nature of its optimal solutions. To address
  this difficulty, this paper proposes a bilevel optimization model
  that can be used as a post-processing step: It redistributes the
  noise introduced by a differentially private mechanism optimally
  while restoring feasibility and near-optimality.  The paper shows
  that, under a natural assumption, this bilevel model can be solved
  efficiently for real-life large-scale nonlinear nonconvex
  optimization problems with sensitive customer data.  The
  experimental results demonstrate the accuracy of the
  privacy-preserving mechanism and showcases significant benefits
  compared to standard approaches.
\end{abstract}

\begin{IEEEkeywords}
Differential Privacy, Bilevel Optimization
\end{IEEEkeywords}

\IEEEpeerreviewmaketitle

\section{Introduction}
\label{sec:introduction}

Differential Privacy (DP) \cite{dwork:06} is a robust framework used 
to measure and bound the privacy risks in computations over 
datasets: It has been successfully applied to numerous applications including
histogram queries \cite{li2010optimizing}, census surveys \cite{abowd2018us,Fioretto:cp-19}, linear regression \cite{chaudhuri:2011} and deep learning \cite{abadi2016deep} to name but a few
examples. In general, DP mechanisms ensure privacy by introducing
calibrated noise to the outputs or the objective of
computations. However, its applications to large-scale, complex
constrained optimization problems have been sparse.

This paper considers parametric optimization problems of the form
\begin{flalign}
{\cal O}(d) = \min_x f(x) \mbox{ s.t. } g(x,d) \geq 0, x \geq 0,
\tag{OPT} \label{OPT} 
\end{flalign} 
where $x$ is a vector of decision variables, $d$ is a real valued vector of problem inputs, and $g(x,d)$ is an abstract function capturing all the constraints on $x$ and $d$. Without loss of generality, the paper assumes that $x$ is non-negative.
Given a vector $d^o$ of sensitive data, the task is to find a
differentially private vector $d^*$ such that $d^* \approx d^o$ and
${\cal O}(d^*) \approx {\cal O}(d^o)$. Effective solutions to this
task are useful in various settings, including the generation of
differentially private test cases for \eqref{OPT} or in sequential
coordination problems, e.g. sequential markets, 
in which agents need to exchange private versions of their data to 
solve ${\cal O}$. It is possible to use traditional
differential privacy techniques (e.g., the ubiquitous Laplace or the Exponential mechanisms) to obtain a private vector $\tilde{d}$ such that $\tilde{d}
\approx d^o$. However, in general, the optimization problem
\eqref{OPT} may not admit any feasible solution for input $\tilde{d}$ or, its objective value ${\cal O}(\tilde{d})$ can be far from ${\cal
  O}(d^o)$. 

This paper aims at remedying this fundamental limitation. Given private versions $\tilde{d}$ of $d^o$ and $\tilde{f}$ of ${\cal O}(d^o)$, it proposes
a bilevel optimization model that leverages the post-processing
immunity of differential privacy to produce a new private vector $d^*$,
based on $\tilde{d}$,
such that ${\cal O}(d^*) \approx {\cal O}(d^o)$. The paper also
presents an algorithm that solves the bilevel model optimally under a
natural monotonicity assumption. The effectiveness of the approach is
demonstrated on large-scale case studies in electrical and gas
networks where customer demands are sensitive, including nonlinear
nonconvex benchmarks with more than $10^4$ variables. The paper
generalizes prior approaches (e.g. \cite{mak19privacy}) 
designed to release benchmarks in energy
systems. Its main contributions are as follows:
\begin{enumerate}[noitemsep, nolistsep, leftmargin=12pt, itemindent=12pt]
\item It presents a general mathematical framework, that relies on bilevel optimization, to obfuscate data $d^o$ of parametric optimization problems;  
\item It proposes an efficient algorithm that solves the bilevel optimization model under a natural monotonicity assumption;
\item It demonstrates the effectiveness of the proposed method on two case studies on energy systems and empirically validates the monotonicity assumption on these case studies.
\item Finally, it demonstrates that optimal solutions to the bilevel model can produce significant improvements in accuracy compared to its relaxations. 
\end{enumerate}

\section{Related Work}
\label{sec:related}
The literature on theoretical results of Differential Privacy (DP) is huge (e.g., \cite{dwork:13,vadhan:17}).  The literature on DP applied to energy systems includes considerably fewer efforts.  {\'A}cs and Castelluccia \cite{acs:11} exploit a direct application of the Laplace mechanism to hide user participation in smart meter datasets, achieving $\epsilon$-DP.  Zhao et al.~\cite{zhao:14} study a DP schema that exploits the ability of households to charge and discharge a battery to hide the real energy consumption of their appliances.  Liao et al.~\cite{liao:17} introduce Di-PriDA, a privacy-preserving mechanism for appliance-level peak-time load balancing control in the smart grid, aimed at masking the consumption of top-k appliances of a household. 
Halder et al.~\cite{halder17architecture} propose an architecture for privacy-preserving thermal inertial load management as a service provided by load-serving entities.
Zhou \etal~\cite{zhou2019} later also present a
particularly interesting relaxed notion of a (stronger) monotonicity
property for a DC-OPF operator and shows how to use it to compute the
operator sensitivity.
This enables a characterization of the network sensitivity, which is useful to preserve the privacy of \emph{monotonic networks}.
A different line of work, conducted by Karapetyan et al.~\cite{karapetyan:17} quantifies empirically the trade-off between privacy and utility in demand  response systems. The authors analyze the effects of a simple Laplace  mechanism on the objective value of the demand response optimization  problem. 
A DP schema that uses constrained post-processing was recently introduced by Fioretto et al.~\cite{fioretto:AAMAS-18} and adopted to release private mobility data.  

There are also related work on privacy-preserving implementations for the 
Alternating Direction Method of Multipliers (ADMM) algorithm. Zhang et al.~\cite{zhang2016dynamic} proposed a  version of the ADMM algorithm for privacy-preserving empirical risk minimization problems, a class of convex problems used for regression and classification tasks. 
Huang et al.~\cite{huang2019dp} proposed an approach that combines an approximate augmented Lagrangian function with time-varying Gaussian noise for general objective functions. 
Ding et al.~\cite{ding2019optimal} proposed P-ADMM, to provide guarantees within a \emph{relaxed} model of differential privacy (called zero-concentrated DP).
Finally, our recent work \cite{mak20privacy} proposed an ADMM formulation to obfuscate 
power system data for the data releasing process.

In contrast to previous work, this paper proposes a general solution for the releasing differentially private inputs of constrained optimization problems while guaranteeing solution feasibility and bounded distance to optimality. 
This work generalizes previous results \cite{mak19privacy} that apply to power systems.
In addition, the paper extracts and reformulates the monotonicity property and provide proofs on the optimality of the
algorithm under the monotonicity assumption. 
Experimental results on power and gas networks 
are also provided as empirical evidence 
for the monotonicity assumption.  


\begin{table*}[t]
\begin{center}
\caption{Nomenclature.\label{tab:notation}}
	\resizebox{0.75\linewidth}{!}
	{
	\begin{tabular}{l l  l l}
    \toprule
		$d \in {\cal N}$ & Data vector $d$ from collection ${\cal N}$& $\alpha$ & Indistinguishability parameter \\
		$\epsilon$ & Privacy parameter & $f$& Cost/Objective value \\
		$\beta$ & Cost acceptance threshold & ${\cal S}$ & Set of optimal solutions \\
		${\cal F}$ & Set of feasible solution & ${\cal O}$ & Parametric optimization problem (OPT) \\
		$v^o / \tilde{v}$ & The original / obfuscated value $v$ & $v^*$& Optimally obfuscated value $v$ \\
		\bottomrule
  	\end{tabular}
  	}
	\end{center}
 \end{table*}

\section{Preliminaries}
\label{sec:dp}

The traditional definition of differential privacy \cite{dwork:06}
aims at protecting the potential participation of an individual in a
computation. For optimization problems however, the participants are
typically known and the input is a vector $d = \langle d_1, \ldots,
d_m \rangle$, where $d_i$ represents a \emph{sensitive quantity}
associated with participant $i$. For instance, $d_i$ may represent the
energy consumption of an industrial customer in an electrical
transmission system. This privacy notion is best captured by the
$\alpha$-indistinguishability framework proposed by
Chatzikokolakis \etal~\cite{chatzikokolakis2013broadening} which protects the sensitive
data of each individual up to some measurable quantity $\alpha > 0$. As a
result, this paper uses an \emph{adjacency relation} $\sim_\alpha$
for input vectors defined as follows:
\begin{equation*} 
\label{eq:adj_rel} 
    d \sim_\alpha d' \Leftrightarrow \exists i
    \textrm{~s.t.~} | d_i - {d'}_{i} | \leq \alpha \;\land\;
    d_j = {d'}_{j}, \forall j \neq i,
\end{equation*} 
where $d$ and ${d'}$ are input vectors to \eqref{OPT} and $\alpha > 0$
is a positive real value. This adjacency relation is used to protect
\emph{an individual value} $d_i$ up to privacy level $\alpha$ even if
an attacker acquires information about all other inputs $d_j$ ($j \neq
i$).

\begin{definition}[Differential Privacy]
  \label{eq:dp_def}
  Let $\alpha \!>\! 0$.  A randomized mechanism $\cM \!:\! \cD \!\to\!\cR$ 
  with domain $\cD$ and range $\cR$ is ($\epsilon,\alpha$)-indistinguishable if, for any output response $O \subseteq \cR$ and any two
  \emph{adjacent} input vectors $d$ and $d'$ such that  $d \sim_\alpha d'$,
\begin{equation*}
  Pr[\cM(d) \in O] \leq e^\epsilon Pr[\cM({d'}) \in O].
\end{equation*}
\end{definition}

\noindent 
Parameter $\epsilon \!\geq\! 0$ controls the level of \emph{privacy}, with small values denoting strong privacy, while $\alpha$ controls the level of \emph{indistinguishability}.
For notational simplicity, this paper assumes that $\epsilon$ is fixed
to a constant and refers to mechanisms satisfying the definition above
as $\alpha$-indistinguishable.


The post-processing immunity of DP \cite{dwork:13} guarantees that a
private dataset remains private even when subjected to arbitrary
subsequent computations.

\begin{theorem}[Post-Processing Immunity] 
\label{th:postprocessing} 
Let $\cM$ be an $\alpha$-indistinguishable mechanism and $g$ be
a data-independent mapping from the set of possible output sequences to 
an arbitrary set. Then, $g \circ \cM$ is $\alpha$-indistinguishable.
\end{theorem}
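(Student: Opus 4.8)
The plan is to reduce to the case where $g$ is deterministic and then lift to randomized post-processing by averaging over the internal randomness of $g$. First, suppose $g$ is a deterministic measurable map on $\cR$. Fix adjacent inputs $d \sim_\alpha d'$ and an arbitrary (measurable) set $O$ in the codomain of $g$. The key observation is that the event $\{(g \circ \cM)(d) \in O\}$ is identical to the event $\{\cM(d) \in g^{-1}(O)\}$, where $g^{-1}(O) = \{r \in \cR : g(r) \in O\}$. Since $g^{-1}(O) \subseteq \cR$ is a valid response set, Definition~\ref{eq:dp_def} applied to it gives
\[
\Pr[(g \circ \cM)(d) \in O] = \Pr[\cM(d) \in g^{-1}(O)] \le e^\epsilon \Pr[\cM(d') \in g^{-1}(O)] = e^\epsilon \Pr[(g \circ \cM)(d') \in O],
\]
which is precisely the $\alpha$-indistinguishability condition for $g \circ \cM$.

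Second, for a randomized $g$, I would use the standard representation $g(r) = h(r, \omega)$, where $\omega$ is auxiliary randomness independent of $\cM$ and $h(\cdot, \omega)$ is deterministic for each fixed $\omega$; the hypothesis that $g$ is \emph{data-independent} is exactly what guarantees that the law of $\omega$ does not depend on the input $d$. Conditioning on $\omega = w$, the deterministic case above yields $\Pr[h(\cM(d), w) \in O] \le e^\epsilon \Pr[h(\cM(d'), w) \in O]$ for every $w$. Taking the expectation over $\omega$ and using that $e^\epsilon$ is a nonnegative constant (so the inequality is preserved by monotonicity of the integral) gives $\Pr[(g \circ \cM)(d) \in O] \le e^\epsilon \Pr[(g \circ \cM)(d') \in O]$, as required.

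The only delicate point, and the one I expect to be the main obstacle, is measurability: the argument requires $g^{-1}(O)$ to be an admissible response set in Definition~\ref{eq:dp_def}, i.e.\ a measurable subset of $\cR$, which holds once $g$ is assumed measurable; and in the randomized case one needs $(r,\omega) \mapsto h(r,\omega)$ jointly measurable together with an application of Fubini/Tonelli to justify interchanging the conditioning on $\omega$ with the probability over $\cM$. These are routine measure-theoretic hypotheses that I would state rather than belabor. Note that the proof uses no structure of $\alpha$, $\epsilon$, or the adjacency relation $\sim_\alpha$ beyond what appears in Definition~\ref{eq:dp_def}, so it is agnostic to the particular indistinguishability notion in use.
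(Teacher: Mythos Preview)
Your argument is the standard and correct proof of post-processing immunity: pull back the event through $g$ in the deterministic case, then average over the auxiliary randomness in the randomized case. There is nothing to compare against, however, because the paper does not supply its own proof of this theorem. It is stated as a known background result (with a citation to \cite{dwork:13}) and used as a black box; the paper's original contributions and proofs begin with Theorem~\ref{th:bound} and the lemmas surrounding the bilevel model. So your proposal is fine as a self-contained justification, and in fact matches the textbook argument in the cited reference, but it is not filling a gap that the paper intended to leave open for proof.
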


A real function $f$ over a vector $d$ can be made indistinguishable by
injecting carefully calibrated noise to its output.  The amount of
noise to inject depends on the \emph{sensitivity} $\Delta_f$ of $f$
defined as $ \Delta_f = \max_{d \sim_\alpha {d'}} \left\| f(d) -
f({d'})\right\|_1.$ For instance, querying a customer load from a
dataset $d$ corresponds to an identity query whose sensitivity is
$\alpha$. The Laplace mechanism achieves $\alpha$-indistinguishability
by returning the randomized output $f(d)+ z$, where $z$ is drawn from
the Laplace distribution
$\textrm{\Lap}\left({\Delta_f}/{\epsilon}\right)$
\cite{chatzikokolakis2013broadening}.

Table~\ref{tab:notation} summarizes the notation  adopted throughout the paper.

\section{Differentially Private Optimization}
\subsection{Problem Definition}
\label{section:pb}

Consider the parametric optimization problem \eqref{OPT}, a sensitive
vector $d^o$, an $\alpha$-indistinguishable version $\tilde{d}$ of
$d^o$, and an approximation $\tilde{f}$ of $f^o = {\cal O}(d^o)$. For
instance, $\tilde{d}$ can be obtained by applying the Laplace
mechanism on identity queries on all $d_i$; $\tilde{f}$ can be a
private version of ${\cal O}(d^o)$, or the value ${\cal O}(d^o)$
itself if it is public
\footnote{
Previous work~\cite{dwork2011firm,dwork:13} has shown that 
even if an attacker has a hold on some publicly available information about the data the privacy loss on the data set 
is still bounded by the differential privacy mechanism. 
}
which is typically the case when the optimization
is a market-clearing mechanism, or an approximation of ${\cal
  O}(d^o)$ obtained using public information only (e.g., a public
forecast of $d^o$). The paper simply assumes that $|{\cal O}(d^o) -
\tilde{f}| \leq \beta^o$ for some 
user defined value $\beta^o > 0$, which is not
restrictive.
Note that the assumption obviously holds when $f^o$ is public.
The
goal is to find a vector $d^*$ using only $\tilde{d}$, $\tilde{f}$,
and the definition of \eqref{OPT} such that
\[
d^* \approx \tilde{d} \mbox{ and } {\cal O}(d^*) \approx \tilde{f}.
\]
Observe that, by Theorem \ref{th:postprocessing}, $d^*$ will be
$\alpha$-indistinguishable. It will be close to $d^o$ if $\tilde{d}$ is
close to $d^o$. Moreover, \eqref{OPT} is feasible for $d^*$ and ${\cal
  O}(d^*)$ will be close to ${\cal O}(d^o)$ if $\tilde{f}$ is. The paper
uses ${\cal S}(d)$ to denote the set of optimal solutions to
\eqref{OPT}, i.e.,
\begin{flalign*}
      {\cal S}(d) = \argmin_x f(x) \mbox{ s.t. } g(x,d) \geq 0, x \geq 0,
\end{flalign*}
and ${\cal F}(d)$ to denote
the set of feasible solutions, i.e., \begin{flalign*} {\cal F}(d) = \{
  x \mid g(x,d) \geq 0, x \geq 0 \}.
\end{flalign*}
The paper assumes that ${\cal F}(d^o)$ is not empty.

\subsection{The Bilevel Optimization Model}

The problem defined in Section \ref{section:pb} can be tackled by a
bilevel optimization model \ref{eq:BL}, i.e.,
\begin{subequations} 
  \makeatletter
  \def\@currentlabel{BL}
  \makeatother
  \label{eq:BL}
  \renewcommand{\theequation}{BL\arabic{equation}}
  \begin{alignat}{2}
  d^* = \argmin_{d} & \;\; \| d - \tilde{d} \|^2_2 \tag{BL1} \label{BL1} 
  \\
  \text{s.t.}~~
        & |{\cal O}(d) - \tilde{f}| \leq \beta. \tag{BL2} \label{BL2}
\end{alignat}
\end{subequations}
Its output is a private $d^*$ whose L$^2$-distance to $\tilde{d}$ is
minimized and whose value ${\cal O}(d^*)$ is in the interval
$[\tilde{f} - \beta,\tilde{f} + \beta]$ for a parameter $\beta \geq \beta^o$.
To make the bilevel nature more explicit, the above can be reformulated as
\begin{align}
\min_{d,x^*} & \;\; \| d - \tilde{d} \|^2_2 \tag{BL1'} \label{BL1'} \\ 
\text{s.t.}~~ & 
        |f(x^*) - \tilde{f}| \leq \beta \tag{BL2'} \label{BL2'} \\ 
        &  x^* = \argmin_{x \geq 0} f(x) \mbox{ s.t. }  g(x,d) \geq 0. \tag{Follower} \label{BL3}
\end{align}

\noindent
Note that $d^o$ is a feasible
solution to \eqref{eq:BL},
but not necessarily optimal.
The set of optimal solutions to \eqref{eq:BL} is denoted by ${\cal S}^{BL}$ and the set of feasible solutions by ${\cal F}^{BL}$.
If the private data $\tilde{d}$ satisfies \eqref{BL2} and 
does not require any post-processing,  
it is returned by model \ref{eq:BL}
(i.e. $d^* = \tilde{d}$).
%

\begin{theorem}
\label{th:bound}
$d^* \in {\cal S}^{\it{BL}}$ implies
$
	\| d^* - d^o \|_{2} \leq 2 \| \tilde{d} - d^o \|_{2}.
$
\end{theorem}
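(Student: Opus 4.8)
The plan is to exploit optimality of $d^*$ together with feasibility of the true data vector $d^o$ for the bilevel model, and then close with a single triangle inequality. The proof is short, so the main work is verifying the one nontrivial ingredient: that $d^o \in {\cal F}^{BL}$.

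First I would check that $d^o$ is feasible for \eqref{eq:BL}. Since ${\cal F}(d^o)$ is assumed nonempty, ${\cal O}(d^o) = f^o$ is well defined, and by the standing assumption $|{\cal O}(d^o) - \tilde{f}| \leq \beta^o$. Because the model is run with a threshold $\beta \geq \beta^o$, constraint \eqref{BL2} holds at $d = d^o$, i.e. $|{\cal O}(d^o) - \tilde{f}| \leq \beta^o \leq \beta$. Hence $d^o \in {\cal F}^{BL}$. (This is exactly the remark ``$d^o$ is a feasible solution to \eqref{eq:BL}'' already noted after the model.)

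Next, since $d^* \in {\cal S}^{BL}$ is an optimal solution and the objective \eqref{BL1} is $\|d - \tilde{d}\|_2^2$, optimality gives $\|d^* - \tilde{d}\|_2^2 \leq \|d^o - \tilde{d}\|_2^2$, and therefore $\|d^* - \tilde{d}\|_2 \leq \|d^o - \tilde{d}\|_2$ (both sides nonnegative). Finally, applying the triangle inequality for the $\ell_2$ norm,
\[
\|d^* - d^o\|_2 \;\leq\; \|d^* - \tilde{d}\|_2 + \|\tilde{d} - d^o\|_2 \;\leq\; \|d^o - \tilde{d}\|_2 + \|\tilde{d} - d^o\|_2 \;=\; 2\,\|\tilde{d} - d^o\|_2,
\]
which is the claimed bound.

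The only conceptual step — and the one to state carefully — is the feasibility of $d^o$, since it quietly uses both the hypothesis $|{\cal O}(d^o) - \tilde{f}| \leq \beta^o$ and the parameter choice $\beta \geq \beta^o$; everything else is the triangle inequality and monotonicity of squaring on $[0,\infty)$. I do not anticipate any genuine obstacle, only the need to make the feasibility argument explicit so the bound is not vacuous when $\mathcal{S}^{BL}$ might otherwise be taken over an empty feasible set.
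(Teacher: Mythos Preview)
Your proof is correct and follows essentially the same approach as the paper: triangle inequality combined with the optimality of $d^*$ against the feasible point $d^o$. The only difference is that you spell out in detail why $d^o \in {\cal F}^{BL}$, whereas the paper simply cites this fact from the earlier remark following the model.
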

\begin{proof}
Theorem \ref{th:bound} generalizes a prior result from
\cite{fioretto:18b,mak19privacy}.
\begin{align*}
\| d^* - d^o \|_{2} & \leq \| d^* - \tilde{d} \|_{2} + \| \tilde{d} - d^o \|_{2} \label{eq:p2}\\
	    &\leq 2 \|\tilde{d} - d^o\|_{2}.
\end{align*}
where the first inequality follows from the triangle inequality on norms and the second inequality follows from
$
\| d^* - d^o \|_{2} \leq \|d^o - \tilde{d}\|_{2}
$ 
by optimality of $d^*$ and $d^o \in {\cal F}^{BL}$.
\end{proof}

 It implies that, when a Laplace mechanism
produces $\tilde{d}$, a solution $d^* \in {\cal S}^{\it{BL}}$ is no
more than a factor of 2 away from optimality since the Laplace
mechanism is optimal for identity queries
\cite{koufogiannis:15}. In other words, {\em \eqref{eq:BL} restores feasibility
  and near-optimality at a constant cost in accuracy}. In
practice, as shown in Section \ref{sec:experiments}, $d^*$ is typically closer to $d^o$ than $\tilde{d}$ is.

\subsection{High Point Relaxation (HPR)}
Bilevel optimization is computationally challenging. It is strongly
NP-hard \cite{hansen:92} and even determining the optimality of
a solution \cite{vicente:94} is NP-hard.
The High Point Relaxation \eqref{eq:HPR}, defined as 
\begin{subequations}
  \makeatletter
  \def\@currentlabel{HPR}
  \makeatother
  \label{eq:HPR}
  \renewcommand{\theequation}{HPR\arabic{equation}}
\begin{alignat}{2}
d^h = \argmin_{d,x \geq 0} & \;\; \| d - \tilde{d} \|^2_2 \tag{HPR1} \\
\text{s.t.}~~
	&  g(x,d) \geq 0  \label{eq:HPR2} \tag{HPR2} \\
        & |f(x) - \tilde{f} | \leq \beta \tag{HPR3} 
\end{alignat}
\end{subequations}
is an important tool in bilevel optimization.
Due to the nature of relaxation, ${\cal O}(d^h) \!\leq\! {\cal O}(d^*)$. 
Theorem \ref{th:bound} also holds for \eqref{eq:HPR},
i.e., $\| d^h - d^o \|_{2} \!\leq\! 2 \| \tilde{d} - d^o \|_{2}$.  
The set
of optimal solutions to \eqref{eq:HPR} is denoted by ${\cal S}^{\it{HPR}}$ and
its set of feasible solutions by ${\cal F}^{\it{HPR}}$.  For
simplicity, $(d,x) \in {\cal S}^{\it{P}}$ and $d \in {\cal
  S}^{\it{P}}$ are both used to denote an optimal solution to a
problem (P) and its projection to $d$.


\begin{figure}[t]
	\centering
	 \includegraphics[width=.45\textwidth]{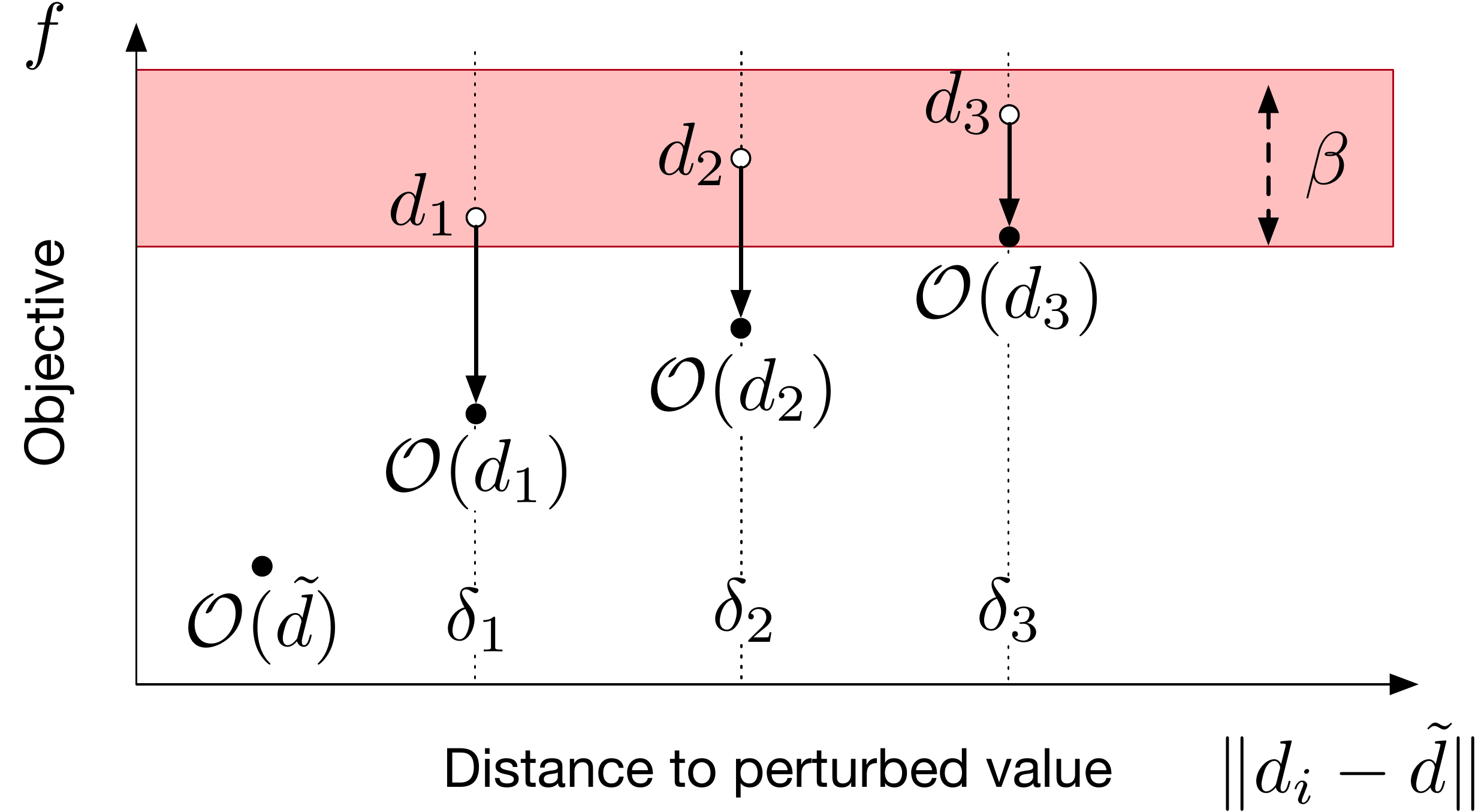}
	 \caption{Illustrating the Intuition Underlying ${\cal O}^{\uparrow}$.
	 }%
	\label{fig:push_exp}
\end{figure}

\subsection{Solving the Bilevel Model}

While bilevel optimization is, in general, computationally challenging,
Model \ref{eq:BL} presents a substantial structure: \eqref{BL3} minimizes $f$
but \eqref{BL2'} restrains it to be in a tight interval and \eqref{BL1'}
keeps the potential solution close to $\tilde{d}$.  The proposed
solution technique leverages these observations and an additional
insight derived from practical applications.

\paragraph{Intuition}

Consider a pair $(\bar{d},\bar{x})$ such that $\bar{x} \geq 0$,
$g(\bar{x},\bar{d}) \geq 0$, and $|f(\bar{x}) - \tilde{f}|  \leq
\beta$. Note that $\bar{x}$ is not necessarily an optimal solution 
(i.e., a minimizer) to Problem \eqref{BL3}. 
However, if the optimal objective value ${\cal O}(\bar{d})$ is such that ${\cal O}(\bar{d}) \geq \tilde{f} - \beta$,
then $\bar{d} \in {\cal F}^{BL}$. The interesting case is when ${\cal
  O}( \bar{d}) < \tilde{f} - \beta$. The proposed solution method
recognizes that, in many optimization problems with sensitive data,
$\bar{d}_i$ represents some data about participant $i$, such as her
electricity consumption. Assume, for instance, that $\bar{d}$
represents the customer demands (the reasoning is similar if the data
represents prices and reversed if the data represents production
capabilities).  By increasing $\bar{d}_i$, the value ${\cal O}(
\bar{d})$ is expected to rise as well. The solution technique exploits
this insight: It tries to find solutions that maximize the customer
demands while staying within a small distance of $\tilde{d}$. In so doing,
it restricts attention to input vectors in the set ${\cal N}$ defined
as
\[
{\cal N} = \{ d \mid \exists x \geq 0: g(x,d) \geq 0 \mbox{ and }  |f(x) - \tilde{f} \| \leq \beta \}.
\]


\paragraph{Solution Method} 

This section presents an effective algorithm for solving the bilevel
optimization problem \eqref{eq:BL} when \eqref{OPT} is monotone with respect to
the sensitive data, capturing the above intuition. 

\begin{definition}[Monotonicity]
\label{def:monotonic}
\eqref{OPT} is monotone if there exists a proxy concave function $m: \Re^m
\rightarrow \Re$ such that if, for all $d^1, d^2 \in {\cal N}$,
$
     m(d^1) \geq m(d^2) \Rightarrow {\cal O}(d^1) \geq {\cal O}(d^2).
$
\end{definition}

\noindent
Note that the monotonicity assumption applies only to input vectors in
${\cal N}$.
While monotonicity is unlikely to be true for general parametric optimization problems ${\cal O}$, since they can be nonconvex, the computational results reported in Section \ref{sec:experiments}
illustrates that this property holds on several real and realistic benchmarks.

To solve bilevel optimization problems over a monotone follower, the
approach relies on solving optimization problems of the form
\begin{flalign}
{\cal O}^{\uparrow}(\delta) = \;\;  & \max_{d,x \geq 0} \hspace{0.85cm} \;\;   m(d) & \tag{PP1} \\
&  \text{subject to } \;\; g(x,d) \geq 0 & \tag{PP2} \\ 
&  \hspace{1.4cm}  \;\; |f(x) - \tilde{f}| \leq \beta & \tag{PP3} \\
&  \hspace{1.4cm}  \;\; \| d - \tilde{d} \|^2_2 \leq  \ \delta & \tag{PP4}
\end{flalign}
\noindent
The set of optimal and feasible solutions to ${\cal
  O}^{\uparrow}(\delta)$ are denoted by ${\cal S}^{\uparrow}(\delta)$
and ${\cal F}^{\uparrow}(\delta)$ respectively.  For a given $\delta$,
${\cal O}^{\uparrow}(\delta)$ finds a vector $\bar{d} \in {\cal N}$
that maximizes $m(\cdot)$ but remains within a distance $\delta$ of
$\tilde{d}$ . Since, by monotonicity, $m(\cdot)$ is a proxy for maximizing
 the optimal cost for the optimization problem
${\cal O}(\cdot)$, the optimization can be viewed as searching for a
feasible solution of \eqref{eq:BL} within a distance $\delta$ of $\tilde{d}$
which maximizes ${\cal O}(\cdot)$. Figure \ref{fig:push_exp} illustrates
the role of ${\cal O}^{\uparrow}$. Vector $d_i \in {\cal N}$ is the
optimal solution of ${\cal O}^{\uparrow}(\delta_i)$. As the distance
$\delta_i$ increases, $d_i={\cal O}^{\uparrow}(\delta_i)$ and ${\cal O}(d_i)$
should increase as well.

Under the monotonicity assumption, \eqref{eq:BL} will be shown equivalent to
the following optimization problem:
\begin{flalign}
\min_{d,\delta \geq 0} \delta \mbox{ s.t. } [d \in S^{\uparrow}(\delta) \ \land \ {\cal O}(d) \geq \tilde{f} - \beta ] \tag{BLM} \label{eq:blm}
\end{flalign}
which is defined only in terms of ${\cal O}$ and ${\cal
  O}^{\uparrow}$. Observe that $\delta$ is a scalar and hence it is
natural to solve \eqref{eq:blm} using binary search as depicted in
Algorithm \eqref{alg:BLM}. The algorithm receives as input a tuple
$\langle \delta^{l}, \delta^{u}, \eta \rangle$, where $\delta^{l}$ and
$\delta^{u}$ are lower and upper bounds on the optimal value
$\delta^*$ for \eqref{eq:blm}, and produces an $\eta$-approximation to
\eqref{eq:blm}. Algorithm \eqref{alg:BLM} is a simple binary search on
the value $\delta$ alternating the optimizations of ${\cal O}$ and
${\cal O}^{\uparrow}$. For a given $\delta$, line 3 solves ${\cal
  O}^{\uparrow}$. If the resulting optimization satisfies the second
constraint of \eqref{eq:blm}, a new feasible solution to \eqref{eq:BL}
is obtained and the upper bound can be updated. Otherwise, Algorithm
\eqref{alg:BLM} has identified a new lower bound.  Note that, in
practice, good lower and upper bounds are often available. For
instance, it is possible to use an optimal solution $d^h$ of
\eqref{eq:HPR} and set $\delta^l = \delta^h = \| d^h - \tilde{d}
\|^2_2$.  To obtain $\delta^{u}$, one can start with $\delta^{h}$ and
continue by doubling its value iteratively until Lemma
\ref{lemma:feasible} below applies.

\begin{algorithm}[!t]
\SetKwInOut{Input}{Inputs}
\SetKwInOut{Output}{Output}
\caption{Solving the BLM Optimization Problem.\!\!\!\!\!}
\label{alg:BLM}
\SetKwInOut{Input}{Inputs}
\SetKwInOut{Output}{Output}
\SetKwFunction{FR}{Phase1}
\SetKwFunction{FC}{Phase2}
\SetKwProg{Fn}{Function}{:}{}
\SetKwProg{Mn}{Algorithm}{:}{}

\Input{$\langle \delta^{l}, \delta^{u}, \eta \rangle$}
\Output{An $\eta$-approximation to \eqref{eq:BL}}
  \While{$\delta^u - \delta^l > \eta$}
  {
    $\delta \gets \frac{\delta^{l} + \delta^{u}}{2} $\\
    solve ${\cal O}^\uparrow(\delta)$ and let $(d^{\uparrow},x^{\uparrow})$ be an optimal solution \\
    $\delta^{\uparrow} \gets \| d^{\uparrow} - \tilde{d} \|$ \\
    \lIf{${\cal O}(d^{\uparrow}) > \tilde{f} - \beta$}{
	$\delta^u \gets \delta^{\uparrow} \mbox{ {\bf else} }$ $\delta^l \gets \delta$
    }
  }
  \KwRet ${\cal S}^{\uparrow}(\delta^u)$ 
\end{algorithm}

\paragraph{Correctness} It remains to prove the correctness of the solution technique. The following two lemmas capture important properties of ${\cal
  O}^{\uparrow}$. The first lemma shows that, when $\delta$ is large
enough, ${\cal O}^{\uparrow}(\delta)$ always returns a feasible
solution to \eqref{eq:BL}.

\begin{lemma}
\label{lemma:feasible}
Let $\delta^o = \| d^o - \tilde{d} \|_2^2 \leq \delta^{o}$ and $(d^{\uparrow},x^{\uparrow}) \in {\cal  S}^{\uparrow}(\delta^{o})$. Then $d^{\uparrow} \in {\cal F}^{BL}$.
\end{lemma}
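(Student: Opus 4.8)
\emph{Proof strategy.} The target is the membership $d^{\uparrow}\in{\cal F}^{BL}$, which (since $x^{\uparrow}\ge 0$ and $g(x^{\uparrow},d^{\uparrow})\ge 0$ already make the follower feasible at $d^{\uparrow}$, so ${\cal O}(d^{\uparrow})$ is well defined) is equivalent to the two inequalities $\tilde{f}-\beta\le{\cal O}(d^{\uparrow})\le\tilde{f}+\beta$. The plan is to obtain the upper bound for free from the feasibility of $(d^{\uparrow},x^{\uparrow})$ for ${\cal O}^{\uparrow}$, and to obtain the lower bound from the monotonicity assumption, using $d^o$ (together with one of its optimizers) as a comparison point inside ${\cal O}^{\uparrow}(\delta^o)$.

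First I would collect the elementary facts. Since $(d^{\uparrow},x^{\uparrow})$ is feasible for ${\cal O}^{\uparrow}(\delta^o)$, we have $x^{\uparrow}\ge 0$, $g(x^{\uparrow},d^{\uparrow})\ge 0$, and $|f(x^{\uparrow})-\tilde{f}|\le\beta$; the first two give $d^{\uparrow}\in{\cal N}$, and because $x^{\uparrow}$ is a feasible (not necessarily optimal) point of the follower at $d^{\uparrow}$, ${\cal O}(d^{\uparrow})\le f(x^{\uparrow})\le\tilde{f}+\beta$, which is the upper bound. On the other side, pick any $x^o\in{\cal S}(d^o)$ (nonempty since ${\cal F}(d^o)\ne\emptyset$): then $x^o\ge 0$, $g(x^o,d^o)\ge 0$, $f(x^o)={\cal O}(d^o)$, and $|f(x^o)-\tilde{f}|=|{\cal O}(d^o)-\tilde{f}|\le\beta^o\le\beta$, so $d^o\in{\cal N}$ as well, and $(d^o,x^o)$ satisfies $\|d^o-\tilde{d}\|_2^2=\delta^o$, hence $(d^o,x^o)$ is feasible for ${\cal O}^{\uparrow}(\delta^o)$.

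The lower bound then follows in three short steps: optimality of $(d^{\uparrow},x^{\uparrow})$ for ${\cal O}^{\uparrow}(\delta^o)$ together with feasibility of $(d^o,x^o)$ gives $m(d^{\uparrow})\ge m(d^o)$; since both $d^{\uparrow}$ and $d^o$ lie in ${\cal N}$, Definition~\ref{def:monotonic} upgrades this to ${\cal O}(d^{\uparrow})\ge{\cal O}(d^o)$; and ${\cal O}(d^o)\ge\tilde{f}-\beta^o\ge\tilde{f}-\beta$. Combining with the upper bound yields $|{\cal O}(d^{\uparrow})-\tilde{f}|\le\beta$, i.e. $d^{\uparrow}\in{\cal F}^{BL}$. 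The same argument goes through verbatim with any $\delta\ge\delta^o$ in place of $\delta^o$, which is the form used by the binary search in Algorithm~\ref{alg:BLM}.

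The one place that needs care — and the step I expect to be the crux — is the invocation of monotonicity: Definition~\ref{def:monotonic} is only assumed on ${\cal N}$, so the proof must first certify $d^{\uparrow}\in{\cal N}$ and $d^o\in{\cal N}$, which is exactly why the lower bound cannot be argued by a direct chase of inequalities (the follower optimum at $d^{\uparrow}$ may sit strictly below $f(x^{\uparrow})$) and genuinely relies on the proxy function $m$. Everything else is bookkeeping with the chain $\tilde{f}-\beta\le\tilde{f}-\beta^o\le{\cal O}(d^o)$ on one side and ${\cal O}(d^{\uparrow})\le f(x^{\uparrow})\le\tilde{f}+\beta$ on the other.
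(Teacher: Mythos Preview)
Your proof is correct and follows essentially the same route as the paper: show that $(d^o,x^o)$ is feasible for ${\cal O}^{\uparrow}(\delta^o)$, use optimality of $d^{\uparrow}$ to get $m(d^{\uparrow})\ge m(d^o)$, invoke monotonicity for ${\cal O}(d^{\uparrow})\ge{\cal O}(d^o)\ge\tilde{f}-\beta$, and bound above via ${\cal O}(d^{\uparrow})\le f(x^{\uparrow})\le\tilde{f}+\beta$. If anything, you are a bit more careful than the paper in explicitly verifying $d^{\uparrow},d^o\in{\cal N}$ before applying Definition~\ref{def:monotonic}.
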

\begin{proof}
  By definition of $\delta^o$, $d^o \in {\cal
    F}^{\uparrow}(\delta^o)$ and there exists $x^o$ such that the
  pair $(d^o,x^o)$ satisfies conditions (PP2)--(PP4).  Since
  $(d^{\uparrow},x^{\uparrow}) \in {\cal S}^{\uparrow}(\delta^{o})$,
  $m(d^{\uparrow}) \geq m(d^o)$ and, by monotonicity, ${\cal
    O}(d^{\uparrow}) \geq {\cal O}(d^{o}) \geq \tilde{f} - \beta$. By (PP3),
  $f(x^{\uparrow}) \leq f^o + \beta$ and, by (PP2), ${\cal
    O}(d^{\uparrow}) \leq f(x^{\uparrow})$. Hence
  \[
  \tilde{f} + \beta \geq f(x^{\uparrow}) \geq {\cal O}(d^{\uparrow}) \geq {\cal O}(d^{o}) \geq \tilde{f} - \beta
  \]
  and  $d^{\uparrow}$ satisfies (BL2). 
\end{proof}

The second lemma shows that there is no feasible solution to \eqref{eq:BL}
within distance $\dot{\delta}$ of $\tilde{d}$ when ${\cal
  O}(\dot{\delta})$ returns a solution $d$ that violates (BL2).

\begin{lemma}
\label{lemma:infeasible}
Let $(\dot{d},\dot{x}) \in {\cal S}^{\uparrow}(\dot{\delta})$ and ${\cal O}(\dot{d}) < \tilde{f} - \beta$. Then,
$$
\forall d^f \in {\cal F}^{BL}: \| d^f - \tilde{d} \|_2^2 > \dot{\delta}.
$$
\end{lemma}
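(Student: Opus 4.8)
The statement is a contrapositive-flavored claim, so I would argue by contradiction. Suppose there existed some $d^f \in {\cal F}^{BL}$ with $\| d^f - \tilde{d} \|_2^2 \leq \dot{\delta}$; I will show this forces ${\cal O}(\dot{d}) \geq \tilde{f} - \beta$, contradicting the hypothesis ${\cal O}(\dot{d}) < \tilde{f} - \beta$.

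\textbf{Step 1: Turn the BL-feasible point into a PP-feasible point.} Since $d^f \in {\cal F}^{BL}$, by definition it satisfies \eqref{BL2}, i.e. $|{\cal O}(d^f) - \tilde{f}| \leq \beta$, and ${\cal F}(d^f) \neq \emptyset$. Pick any optimal follower solution $x^f \in {\cal S}(d^f)$; then $x^f \geq 0$, $g(x^f, d^f) \geq 0$, and $f(x^f) = {\cal O}(d^f)$, so $|f(x^f) - \tilde{f}| \leq \beta$. Combined with the assumed bound $\| d^f - \tilde{d} \|_2^2 \leq \dot{\delta}$, the pair $(d^f, x^f)$ satisfies (PP2), (PP3) and (PP4), hence $(d^f,x^f) \in {\cal F}^{\uparrow}(\dot{\delta})$ and in particular $d^f \in {\cal N}$.

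\textbf{Step 2: Use optimality of $(\dot d,\dot x)$ and monotonicity.} Because $(\dot{d},\dot{x}) \in {\cal S}^{\uparrow}(\dot{\delta})$ maximizes the proxy objective $m(\cdot)$ over ${\cal F}^{\uparrow}(\dot{\delta})$, we get $m(\dot{d}) \geq m(d^f)$. Since both $\dot d, d^f \in {\cal N}$, Definition \ref{def:monotonic} (monotonicity) yields ${\cal O}(\dot{d}) \geq {\cal O}(d^f)$. Finally, ${\cal O}(d^f) \geq \tilde{f} - \beta$ from \eqref{BL2}, so ${\cal O}(\dot{d}) \geq \tilde{f} - \beta$, contradicting ${\cal O}(\dot{d}) < \tilde{f} - \beta$. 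Hence no such $d^f$ exists, i.e. every $d^f \in {\cal F}^{BL}$ satisfies $\| d^f - \tilde{d} \|_2^2 > \dot{\delta}$.

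\textbf{Main obstacle.} There is no real analytic difficulty here; the proof mirrors Lemma \ref{lemma:feasible}. The one point that needs care is the bookkeeping in Step 1: one must invoke a genuinely \emph{optimal} follower solution $x^f$ for $d^f$ (not just any feasible $x$) so that $f(x^f)={\cal O}(d^f)$ and the $\beta$-bound transfers from ${\cal O}(d^f)$ to $f(x^f)$, ensuring $(d^f,x^f)$ is PP-feasible and $d^f \in {\cal N}$ so that monotonicity is applicable. The strictness of the conclusion ($> \dot\delta$ rather than $\geq$) comes for free from the contradiction being derived from the non-strict assumption $\|d^f-\tilde d\|_2^2 \le \dot\delta$.
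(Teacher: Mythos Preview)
Your proof is correct and follows essentially the same contradiction argument as the paper: assume $\|d^f-\tilde d\|_2^2\le\dot\delta$, deduce $d^f\in{\cal F}^{\uparrow}(\dot\delta)$, use optimality of $\dot d$ to get $m(\dot d)\ge m(d^f)$, then monotonicity and \eqref{BL2} to derive the contradiction. Your Step~1 is in fact more explicit than the paper's in constructing the witness $x^f\in{\cal S}(d^f)$ and verifying $d^f\in{\cal N}$ so that monotonicity applies, a detail the paper leaves implicit.
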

\begin{proof}
  Consider $d^f \in {\cal F}^{BL}$ and assume that $\| d^f - \tilde{d}
  \|_2^2 \leq \dot{\delta}$, which implies that $d^f \in {\cal
    F}^{\uparrow}(\dot{\delta})$. By optimality of $\dot{d}$,
  $m(\dot{d}) \geq m(d^f)$ and, by monotonicity, ${\cal O}(\dot{d})
  \geq {\cal O}(d^f)$. By (BL2), ${\cal O}(d^f) \geq \tilde{f} -
  \beta$ which contradicts ${\cal O}(\dot{d}) < \tilde{f} - \beta$.
\end{proof}

These two lemmas make it possible to prove the equivalence of \eqref{eq:BL} and
\eqref{eq:blm} when \eqref{OPT} is monotone.

\begin{theorem}
When (OPT) is monotone, \eqref{eq:BL} and \eqref{eq:blm} are equivalent.
\end{theorem}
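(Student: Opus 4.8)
I read ``equivalent'' as the conjunction of: the optimal value $\delta^\star$ of \eqref{eq:blm} equals the optimal value $\|d^\star-\tilde d\|_2^2$ of \eqref{eq:BL} (for any $d^\star\in{\cal S}^{BL}$); \eqref{eq:blm} is solvable and in fact ${\cal S}^{\uparrow}(\delta^\star)\subseteq{\cal S}^{BL}$; and every optimal solution of \eqref{eq:blm} projects onto a point of ${\cal S}^{BL}$. The whole argument reduces to lining up the two constraint systems via two elementary facts. \textbf{Fact~A:} if $d\in{\cal F}^{BL}$ and $x\in{\cal S}(d)$ is a corresponding follower optimum, then $f(x)={\cal O}(d)$ together with \eqref{BL2} gives $|f(x)-\tilde f|\le\beta$, so $(d,x)$ satisfies (PP2)--(PP4) with $\delta:=\|d-\tilde d\|_2^2$; hence ${\cal F}^{\uparrow}(\delta)\neq\emptyset$, ${\cal S}^{\uparrow}(\delta)\neq\emptyset$, and $d\in{\cal N}$. \textbf{Fact~B:} conversely, if $(d,x)\in{\cal F}^{\uparrow}(\delta)$ with ${\cal O}(d)\ge\tilde f-\beta$, then (PP2)--(PP3) give ${\cal O}(d)\le f(x)\le\tilde f+\beta$, so $|{\cal O}(d)-\tilde f|\le\beta$, i.e.\ $d\in{\cal F}^{BL}$ with $\|d-\tilde d\|_2^2\le\delta$.

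\textbf{$\delta^\star\le\|d^\star-\tilde d\|_2^2$.} Fix $d^\star\in{\cal S}^{BL}$ and set $\delta:=\|d^\star-\tilde d\|_2^2$. By Fact~A there is $\hat d\in{\cal S}^{\uparrow}(\delta)$, and $d^\star$ (paired with its follower optimum) lies in ${\cal F}^{\uparrow}(\delta)$, so optimality of $\hat d$ gives $m(\hat d)\ge m(d^\star)$; since $\hat d,d^\star\in{\cal N}$, Definition~\ref{def:monotonic} yields ${\cal O}(\hat d)\ge{\cal O}(d^\star)\ge\tilde f-\beta$. This is exactly the mechanism of Lemma~\ref{lemma:feasible}, applied at the distance $\delta=\|d^\star-\tilde d\|_2^2$ (which satisfies $\delta\le\|d^o-\tilde d\|_2^2$ because $d^o\in{\cal F}^{BL}$). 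Hence $(\hat d,\delta)$ is feasible for \eqref{eq:blm}, so $\delta^\star\le\delta$.

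\textbf{$\delta^\star\ge\|d^\star-\tilde d\|_2^2$ and the solution correspondence.} Let $(d,\delta)$ be any feasible point of \eqref{eq:blm}: then $d\in{\cal S}^{\uparrow}(\delta)\subseteq{\cal F}^{\uparrow}(\delta)$ and ${\cal O}(d)\ge\tilde f-\beta$, so Fact~B gives $d\in{\cal F}^{BL}$ with $\|d-\tilde d\|_2^2\le\delta$; optimality of $d^\star$ then yields $\delta\ge\|d-\tilde d\|_2^2\ge\|d^\star-\tilde d\|_2^2$. Combined with the previous paragraph, $\delta^\star=\|d^\star-\tilde d\|_2^2$, which is the value equivalence. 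Taking $\delta=\delta^\star$ in Fact~A shows ${\cal S}^{\uparrow}(\delta^\star)\neq\emptyset$; and for any $\hat d\in{\cal S}^{\uparrow}(\delta^\star)$, the argument of this paragraph together with ${\cal O}(\hat d)\ge{\cal O}(d^\star)\ge\tilde f-\beta$ gives $\hat d\in{\cal F}^{BL}$ with $\|\hat d-\tilde d\|_2^2\le\delta^\star$, hence with equality, so $\hat d\in{\cal S}^{BL}$. In particular ${\cal S}^{\uparrow}(\delta^\star)\subseteq{\cal S}^{BL}$, and every optimal $(d,\delta)$ of \eqref{eq:blm} (which has $\delta=\delta^\star$ and $d\in{\cal S}^{\uparrow}(\delta^\star)$) projects onto a point of ${\cal S}^{BL}$; this in turn certifies that Algorithm~\ref{alg:BLM} returns (an $\eta$-approximation of) an optimal solution of \eqref{eq:BL}.

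\textbf{Main obstacle.} There is no deep obstruction: once the constraint systems of \eqref{eq:BL}, \eqref{eq:blm}, and ${\cal O}^{\uparrow}$ are matched up through Facts~A and~B, the claim is a short chain of inequalities whose only essential input is the monotonicity hypothesis (Definition~\ref{def:monotonic}), exactly as in Lemmas~\ref{lemma:feasible}--\ref{lemma:infeasible}. The one point needing care is \emph{attainment}: the proof treats ${\cal O}(d)$ as an attained minimum (i.e.\ ${\cal S}(d)\neq\emptyset$) for $d\in{\cal N}$, and ${\cal O}^{\uparrow}(\delta)$ as attaining its maximum so that ${\cal S}^{\uparrow}(\delta)\neq\emptyset$ whenever ${\cal F}^{\uparrow}(\delta)\neq\emptyset$. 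These are the implicit assumptions already underlying the use of $\argmin$ and $\argmax$ throughout the paper, and under them the equivalence follows as above.
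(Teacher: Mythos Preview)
Your proof is correct and follows essentially the same approach as the paper: both rely on the monotonicity mechanism of Lemmas~\ref{lemma:feasible}--\ref{lemma:infeasible} (your Facts~A and~B make these explicit) to sandwich the optimal value of \eqref{eq:blm} against that of \eqref{eq:BL}. You are somewhat more careful than the paper in spelling out attainment assumptions and in establishing the full solution correspondence ${\cal S}^{\uparrow}(\delta^\star)\subseteq{\cal S}^{BL}$, whereas the paper only argues that the particular optimal $d^*$ returned by \eqref{eq:blm} lies in ${\cal S}^{BL}$.
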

\begin{proof}
  Let $(d^*,\delta^*)$ be the optimal solution to \eqref{eq:blm}. Such a
  solution always exists by Lemma \ref{lemma:feasible} and $\| d^* -
  \tilde{d} \|_2^2 \leq \delta^{*}$.  By definition of ${\cal
    O}^{\uparrow}$, ${\cal F}(d^*) \neq \emptyset$ and ${\cal O}(d^*)
  \leq \tilde{f} + \beta$. Since ${\cal O}(d^*) \geq \tilde{f} -
  \beta$ by definition of \eqref{eq:blm}, $d^* \in {\cal F}^{BL}$.

  Consider now $\delta^- < \delta^*$ and a solution $d^- \in {\cal
    S}^{\uparrow}(\delta^-)$. If such a solution exists, ${\cal
    O}(d^-) < f^o - \beta$ by optimality of $\delta^*$. By Lemma
  \ref{lemma:infeasible}, it comes that
$
\forall d^f \in {\cal F}^{BL}: \| d^f - \tilde{d} \|_2^2 > \delta^-.
$
Hence, $d^*$ is also optimal for \eqref{eq:BL}. 
\end{proof}

It remains to show that the algorithm computes an $\eta$-approximation. 
\begin{theorem} Algorithm \eqref{alg:BLM} computes an $\eta$-approximation
of \eqref{eq:BL} when \eqref{OPT} is monotone.
\end{theorem}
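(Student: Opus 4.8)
The plan is to exhibit a loop invariant for Algorithm~\eqref{alg:BLM}, derive termination from it, and then read off the $\eta$-approximation guarantee. Write $\delta^\star$ for the optimal value of \eqref{eq:BL}, i.e. $\delta^\star=\min\{\,\|d^f-\tilde{d}\|_2^2: d^f\in{\cal F}^{BL}\,\}$, which is finite by Lemma~\ref{lemma:feasible} and, by the equivalence of \eqref{eq:BL} and \eqref{eq:blm}, also equals the optimal value of \eqref{eq:blm}. I will assume, as the recommended \eqref{eq:HPR}-based initialization guarantees, that the inputs satisfy $\delta^h\le\delta^l\le\delta^\star\le\delta^u$, where $\delta^h$ is the optimal value of \eqref{eq:HPR}; the bound $\delta^h\le\delta^l$ ensures ${\cal F}^{\uparrow}(\delta)\neq\emptyset$ for every $\delta\ge\delta^l$, so line~3 always solves a feasible problem. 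The invariant $(\star)$ to maintain is: (i)~$\delta^l\le\delta^\star$, and (ii)~${\cal S}^{\uparrow}(\delta^u)\neq\emptyset$ and ${\cal S}^{\uparrow}(\delta^u)\subseteq{\cal F}^{BL}$. Observe that (ii) already forces $\delta^u\ge\delta^\star$: any $d\in{\cal S}^{\uparrow}(\delta^u)$ is \eqref{eq:BL}-feasible and hence at squared distance $\ge\delta^\star$ from $\tilde{d}$, yet at squared distance $\le\delta^u$ by (PP4); so $\delta^\star$ stays inside $[\delta^l,\delta^u]$ throughout.

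First I would check $(\star)$ at initialization. Part (i) is assumed; part (ii) follows from $\delta^u\ge\delta^\star$ together with monotonicity: for any $d\in{\cal S}^{\uparrow}(\delta^u)$ and the \eqref{eq:BL}-optimizer $d^f$ (which lies in ${\cal F}^{\uparrow}(\delta^u)$ since $\|d^f-\tilde{d}\|_2^2=\delta^\star\le\delta^u$), optimality gives $m(d)\ge m(d^f)$, hence ${\cal O}(d)\ge{\cal O}(d^f)\ge\tilde{f}-\beta$, while ${\cal O}(d)\le f(x)\le\tilde{f}+\beta$ by (PP2) and (PP3); this is exactly the argument of Lemma~\ref{lemma:feasible}. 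Next I would show each iteration preserves $(\star)$. Fix an iteration with $\delta=(\delta^l+\delta^u)/2$ and $(d^{\uparrow},x^{\uparrow})\in{\cal S}^{\uparrow}(\delta)$. If ${\cal O}(d^{\uparrow})>\tilde{f}-\beta$, the algorithm sets $\delta^u\gets\delta^{\uparrow}=\|d^{\uparrow}-\tilde{d}\|_2^2\le\delta$; then $d^{\uparrow}\in{\cal F}^{\uparrow}(\delta^{\uparrow})$ so ${\cal S}^{\uparrow}(\delta^{\uparrow})\neq\emptyset$, and re-running the same monotonicity argument \emph{anchored at $d^{\uparrow}$} (every $d\in{\cal S}^{\uparrow}(\delta^{\uparrow})$ has $m(d)\ge m(d^{\uparrow})$, hence ${\cal O}(d)\ge{\cal O}(d^{\uparrow})>\tilde{f}-\beta$, and ${\cal O}(d)\le f(x)\le\tilde{f}+\beta$) gives ${\cal S}^{\uparrow}(\delta^{\uparrow})\subseteq{\cal F}^{BL}$, so (ii) is preserved and (i) is untouched. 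Otherwise ${\cal O}(d^{\uparrow})<\tilde{f}-\beta$, and Lemma~\ref{lemma:infeasible} yields $\|d^f-\tilde{d}\|_2^2>\delta$ for all $d^f\in{\cal F}^{BL}$, i.e. $\delta^\star>\delta$; so $\delta^l\gets\delta$ preserves (i) while (ii) is untouched.

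Termination follows because each iteration at least halves $\delta^u-\delta^l$ (the ``if'' branch sets $\delta^u\gets\delta^{\uparrow}\le(\delta^l+\delta^u)/2$, the ``else'' branch sets $\delta^l\gets(\delta^l+\delta^u)/2$), so after at most $\lceil\log_2((\delta^u-\delta^l)/\eta)\rceil$ iterations (with $\delta^u,\delta^l$ the inputs) the guard fails and the loop returns some $d^u\in{\cal S}^{\uparrow}(\delta^u)$. By invariant (ii), $d^u\in{\cal F}^{BL}$, so its \eqref{eq:BL}-objective satisfies $\|d^u-\tilde{d}\|_2^2\ge\delta^\star$; and by (PP4), the exit condition, and (i), $\|d^u-\tilde{d}\|_2^2\le\delta^u\le\delta^l+\eta\le\delta^\star+\eta$. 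Hence $\delta^\star\le\|d^u-\tilde{d}\|_2^2\le\delta^\star+\eta$, i.e. $d^u$ is an $\eta$-approximate solution of \eqref{eq:BL}.

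I expect the main obstacle to be the maintenance of invariant (ii) across the $\delta^u$-update: it does not suffice that the witness $d^{\uparrow}$ is \eqref{eq:BL}-feasible, since the returned optimizer ${\cal S}^{\uparrow}(\delta^u)$ is taken over the \emph{smaller} ball $\|d-\tilde{d}\|_2^2\le\delta^{\uparrow}$ and may differ from $d^{\uparrow}$; one must re-invoke the monotonicity definition on that smaller feasible set to certify that whatever maximizes $m$ there remains \eqref{eq:BL}-feasible. A secondary subtlety is the boundary case ${\cal O}(d^{\uparrow})=\tilde{f}-\beta$, where $d^{\uparrow}$ is itself \eqref{eq:BL}-feasible at distance $\le\delta$ (so $\delta^\star\le\delta$) and the ``else'' branch is unjustified; this is resolved either by a genericity assumption or by replacing the test in line~4 with ${\cal O}(d^{\uparrow})\ge\tilde{f}-\beta$, under which the analysis above goes through unchanged.
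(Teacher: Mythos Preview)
Your proposal is correct and follows essentially the same route as the paper: both argue that the interval $[\delta^l,\delta^u]$ always brackets the optimal value $\delta^\star=\|d^*-\tilde d\|_2^2$ and then read off the $\eta$-approximation at termination. The paper's proof is a two-line sketch that simply asserts $\delta^l\le\|d^*-\tilde d\|_2^2\le\|d^u-\tilde d\|_2^2\le\delta^u$ at termination, whereas you spell out the loop invariant, explicitly re-establish that ${\cal S}^{\uparrow}(\delta^u)\subseteq{\cal F}^{BL}$ after each $\delta^u$-update (a step the paper's chain of inequalities uses without justification), prove termination, and flag the boundary case ${\cal O}(d^{\uparrow})=\tilde f-\beta$; your version is the fully worked-out form of the same idea.
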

\begin{proof}
Let $d^*$ be an optimal solution to \eqref{eq:BL}. Upon termination of Algorithm \eqref{alg:BLM}, it comes
that 
$
\delta^l \leq \| d^* - \tilde{d} \|^2_2 \leq \delta^u.
$
Moreover, if $d^u \in S^{\uparrow}(\delta^u)$, it follows that $\| d^u - \tilde{d} \|^2_2 \leq \delta^u$. Hence, 
$
\delta^l \leq \| d^* - \tilde{d} \|^2_2 \leq \| d^u - \tilde{d} \|^2_2 \leq \delta^u.
$
\end{proof}
\paragraph{Discussion}
In general, bilevel optimization is extremely difficult and general techniques and reformulations are often challenging to solve. This paper isolates
a class of bilevel problems where the optimal cost of the follower subproblem can be inferred, through monotonicity, by a proxy function that depends on variables controlling also the costs of the leader problem  \eqref{BL1}.
In game-theoretical settings, the proxy function serves as a bridge between the 
leader decisions to the costs of the follower. The existence of such a monotone function $m$ depends on the privacy application but is natural in energy systems where the customer load/demand must be protected. Indeed, the load appears linearly in flow balance constraints and load increases almost always lead to cost increases. 
Exploring other classes of problems (e.g. bounded monotonicity) will be left as future work.

\section{Applications on Energy Systems}

\begin{model}[!t]
	{\small
	\caption{${\cal O}_{\text{OPF}}$: AC Optimal Power Flow}
	\label{model:ac_opf}
	\vspace{-6pt}
	\begin{align}
		\mbox{\bf variables:} \;\;
		& S^g_i, V_i \;\; \forall i\in N, \;\;
		  S_{ij} 	 \;\; \forall(i,j)\in E \cup E^R \nonumber \\
		\mbox{\bf minimize:} \;\;
		& {\cO}(\bm{S^d}) = \sum_{i \in N} {c}_{2i} (\Re(S^g_i))^2 + {c}_{1i}\Re(S^g_i) + {c}_{0i} \label{ac_obj} \\
		\mbox{\bf subject to:} \;\; 
		& \angle V_{i} = 0, \;\; i = \min N \label{eq:ac_0} \\
		& {v}^l_i \leq |V_i| \leq {v}^u_i 		\;\; \forall i \in N \label{eq:ac_1} \\
		& {\theta}^{l}_{ij} \leq \angle (V_i V^*_j) \leq {\theta}^{u}_{ij} \;\; \forall (i,j) \in E  \label{eq:ac_2}  \\
		& {S}^{gl}_i \leq S^g_i \leq {S}^{gu}_i \;\; \forall i \in N \label{eq:ac_3}  \\
		& |S_{ij}| \leq {s}^u_{ij} 					\;\; \forall (i,j) \in E \cup E^R \label{eq:ac_4}  \\
		& S^g_i - {S}^d_i = \textstyle\sum_{(i,j)\in E \cup E^R} S_{ij} \;\; \forall i\in N \label{eq:ac_5}  \\ 
		& S_{ij} = {Y}^*_{ij} |V_i|^2 - {Y}^*_{ij} V_i V^*_j 			 \;\; \forall (i,j)\in E \cup E^R \label{eq:ac_6}
	\end{align}
	}
	\vspace{-12pt}
\end{model}


This section describes two substantial case studies for evaluating the
privacy mechanism: optimal power flow in electricity networks and
optimal compressor optimization in gas networks. Both models are nonlinear and nonconvex.

\paragraph{Optimal Power Flow} \emph{Optimal Power Flow (OPF)} is
the problem of finding the best generator dispatch to meet the demands
in a power network.  A power network $\bm{\cN}$ can be represented as
a graph $(N, E)$, where the nodes in $N$ represent buses and the edges
in $E$ represent lines. The edges in $E$ are directed and $E^R$ is
used to denote those arcs in $E$ but in reverse direction.  The AC
power flow equations are based on complex quantities for current $I$,
voltage $V$, admittance $Y$, and power $S$, and these equations are a
core building block in many power system applications.
Model~\ref{model:ac_opf} shows the AC OPF formulation, with
variables/quantities shown in the complex domain.  Superscripts $u$ and
$l$ are used to indicate upper and lower bounds for variables. The
objective function ${\cO}(\bm{S^g})$ captures the cost of the
generator dispatch, with $\bm{S^g}$ denoting the vector of generator
dispatch values $(S^g_i \:|\: i \in N)$.  
Constraint \eqref{eq:ac_0} sets the reference voltage angle to zero for the 
slack bus $i \in N$ to eliminate numerical symmetries.  
Constraints \eqref{eq:ac_1} bound the voltage magnitudes,
and constraints \eqref{eq:ac_2} limit the voltage angle differences 
for every transmission lines/transformers.
Constraints \eqref{eq:ac_3} enforce the generator output limits,
and constraints \eqref{eq:ac_4} impose the line flow limits.
Finally, constraints \eqref{eq:ac_5}
capture Kirchhoff's Current Law imposing the flow balance across every node,
and constraints \eqref{eq:ac_6} capture Ohm's Law describing 
the AC power flow across lines/transformers.

\begin{model}[!t]
	{\small
	\caption{${\cal O}_{\text{OGF}}$: Optimal Gas Flow}
	\label{model:ac_ogf}
	\vspace{-6pt}
	\begin{align}
		\mbox{\bf variables:} \;\;
		& p_i, q_i \; \forall i \in \cJ, \;
		  q_{ij} \; \forall (i,j) \in \cP, \;
		  R_{ij} \; \forall(i,j) \in \cC \nonumber \\
		\mbox{\bf minimize:} \;\;
		& {\cO}(\bm{q}) = \sum_{(i,j) \in C} \mu^{-1}|q_{ij}|(\max\{R_{ij},1\}^{2(\gamma-1)/\gamma}-1) \label{gas_obj} \\
		\mbox{\bf subject to:} \;\; 
		& \textstyle\sum_{(i,j) \in \cP} q_{ij} - \sum_{(j,i) \in \cP} q_{ji} = q_i, \;\; \forall i \in \cJ \label{eq:gas_0} \\
		& {p_i}^l \leq p_i \leq {p_i}^u 		\;\; \forall i \in N, \;\; {q_{ij}}^l \leq q_{ij} \leq {q_{ij}}^u 		\;\; \forall (i,j) \in \cP \label{eq:gas_1} \\
		& {R_{ij}}^l \leq R_{ij} \leq {R_{ij}}^u 		\;\; \forall (i,j) \in \cC \label{eq:gas_2}\\
		& p_i = p^T_i 	\;\; \forall i \in \cJ^B, q_i = 0 	\;\; \forall i \in \cJ^T, q_i = q^d_i \;\; i  \in \cJ^D\label{eq:gas_4}\\
		& R_{ij}^2 p_{i}^2 - p_{j}^2 = L_{ij} \displaystyle \frac{\lambda a^2}{D_{ij} A_{ij}^2} q_{ij} \lvert q_{ij} \rvert		 \;\; \forall (i,j)\in \cC \label{eq:gas_5}\\
		& p_{i}^2 - p_{j}^2 = L_{ij} \displaystyle \frac{\lambda a^2}{D_{ij} A_{ij}^2} q_{ij} \lvert q_{ij} \rvert		 \;\; \forall (i,j)\in \cP - \cC \label{eq:gas_6}
	\end{align}
	}
	\vspace{-8pt}
\end{model}

\paragraph{Optimal Compressor Optimization}

\emph{Optimal Gas Flow (OGF)} is the problem of finding the best
compression control to maintain pressure requirements in a natural gas
pipeline system.  A natural gas network can be represented as a
directed graph $\bm{\cN}=(\cJ,\cP)$, where a node $i \in \cJ$
represents a junction point and an edge $(i,j)
\in \cP$ represents a pipeline.  Compressors ($\cC \subseteq \cP$) are
installed in a subset of the pipelines for boosting the gas pressure
$p$ in order to maintain pressure requirements for gas flow $q$.  The
set $\cJ^D$ of gas demands and the set $\cJ^T$ of transporting nodes
are modeled as junction points, with net gas flow $q_i$ set to the gas
demand ($q^d_i$) and zero respectively.  For simplicity, the paper
assumes no pressure regulation and losses within junction nodes and
gas flow/flux are conserved throughout the system.  A subset $\cJ^B
\in \cJ$ of the nodes are regulated with constant pressure $p^T_i$.
The length of pipe $(i,j)$ is denoted by $L_{ij}$, its diameter by
$D_{ij}$, and its cross-sectional area by $A_{ij}$.  Universal
quantities include isentropic coefficient $\gamma$, compressor
efficiency factor $\mu$, sound speed $a$, and gas friction factor
$\lambda$.  Model~\ref{model:ac_ogf} depicts the OGF formulation.  The
objective function ${\cO}(\bm{q})$ captures the compressor costs using
the compressor control values $(R_{ij} \:|\: (i,j) \in \cC)$.
Constraints \eqref{eq:gas_0} capture the flow conversation equations.
Constraints \eqref{eq:gas_1} and \eqref{eq:gas_2} capture the
pressure, flux flow, and compressor control bounds.  Constraints
\eqref{eq:gas_4} set the boundary conditions for the demands and the
regulated pressures. Finally, constraints \eqref{eq:gas_5} and
\eqref{eq:gas_6} capture the steady-state isothermal gas flow
equation that describes the pressure loss mechanics within gas pipes.

\begin{figure}[t]
	\centering
	\includegraphics[width=.40\textwidth]{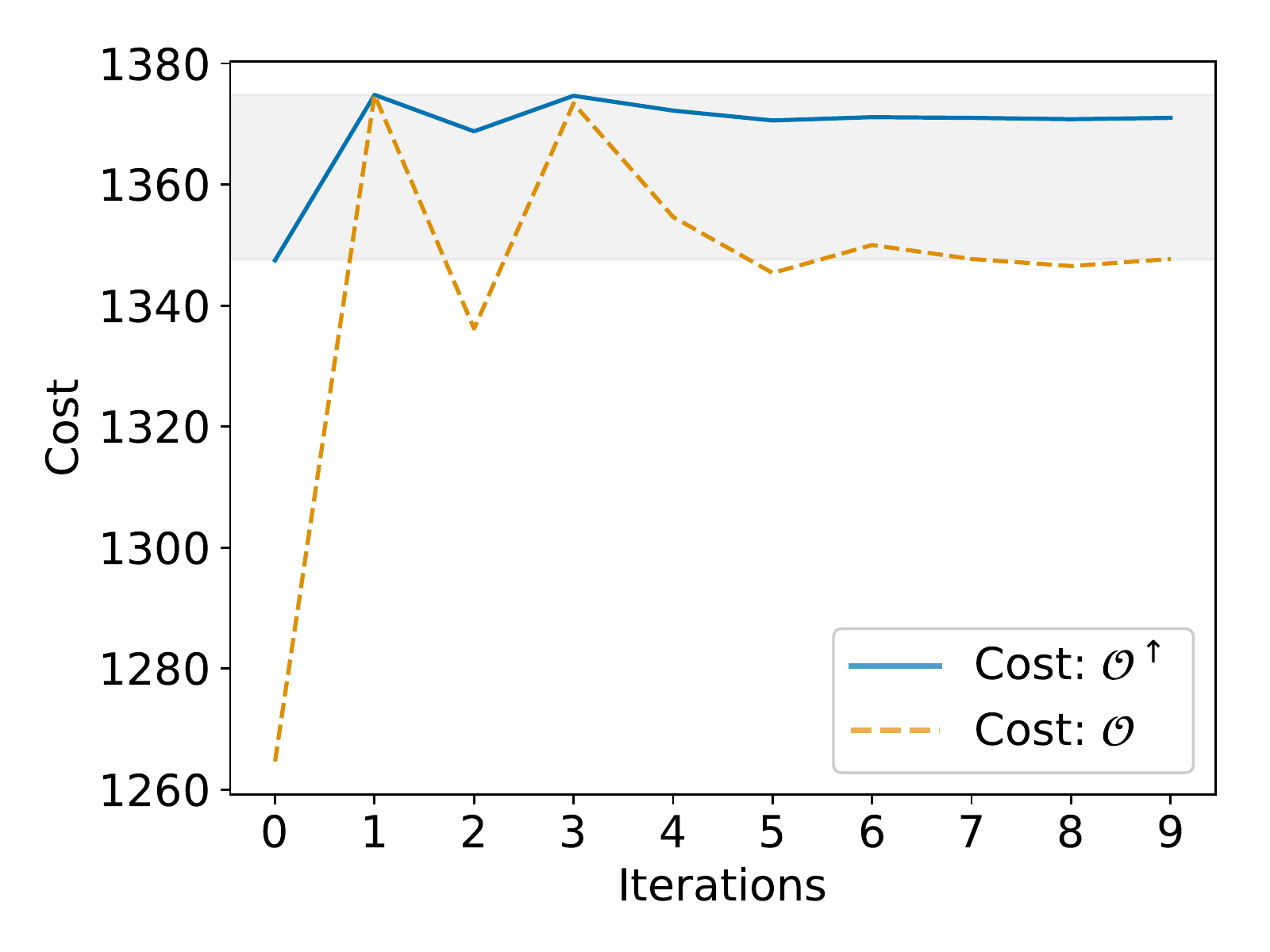}
	\includegraphics[width=.40\textwidth]{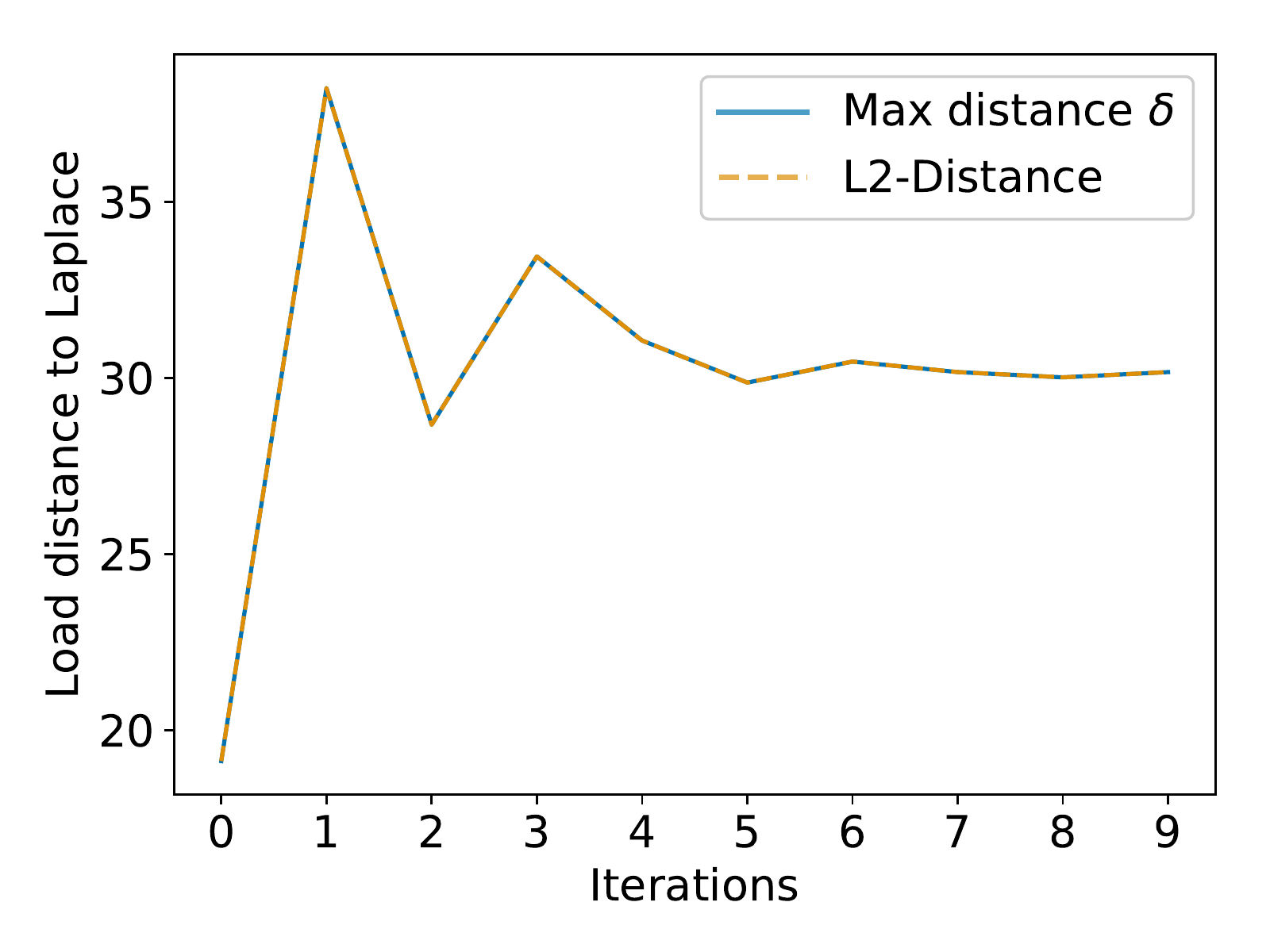}
	\caption{Gaslib-135: OGF costs and L$^2$-distance to $\tilde{\delta}$ as a function of the number of iterations for parameters $\alpha = 0.1$ and $\beta=1\%$.} 
	\label{fig:convergence_example}
\end{figure}
\paragraph{Obfuscation}
In both networks, customer demands are sensitive and are associated
with customer activities. These quantities always appear in the flow
conservation constraints, which are linear. Increasing these values
obviously requires more (electricity and gas) production and hence one
can expect the cost to increase. This is not always the case, because
of the engineering constraints on voltages and pressures and the lower
bounds of the production units, for instance. However, in practice,
because of reliability constraints, these constraints are rarely
binding at optimality and one may expect the systems to behave
monotonically around the optimality point. The above is validated in
the experimental results.


\section{Experimental Evaluation}
\label{sec:experiments}
\paragraph{Setting}
The experiments were performed on a variety of NESTA power
systems~\cite{Coffrin14Nesta} and natural gas test systems
from~\cite{mak19dynamic}, including test instances from
GasLib~\cite{pfetsch2015validation}. Parameter $\epsilon$ is fixed to
1.0 and the \emph{indistinguishability level} $\alpha$ varies from
$10^{-1}$ to $10^{1}$ (in per unit notation).  The \emph{fidelity}
parameter $\beta$ varies from 0.1\% to 10\% of value $\tilde{f} = f^o$.  Convergence parameter $\eta$ is set to $10^{-3}$ (in per unit). The lower bounds and upper bounds are initialized as
specified in prior sections. The solution technique is limited to 3000
calls to ${\cal O}^{\uparrow}$. The models are implemented with
PowerModels.jl~\cite{Coffrin:18} with the nonlinear solver
IPOPT~\cite{wachter06on}. Note that some of the test cases have more
than $10^4$ variables.

\paragraph{Behavior of the Solution Technique}
Figure~\ref{fig:convergence_example} shows how the costs (${\cal O}$
and ${\cal O}^{\uparrow}$) and L$^2$-Distance to $\tilde{\delta}$
typically change when running Algorithm~\ref{alg:BLM} and its
initialization.  The L$^2$-Distance to $\tilde{d}$ increases initially
to find a solution within the feasible cost range (shaded area). The
binary search then finds the optimal distance in a few
iterations. (PP4) is usually binding when optimizing ${\cal
  O}^{\uparrow}$.

\begin{table*}[t]
\centering
\caption{OPF: Average CPU times and number of optimizations (50 runs) for parameters $\alpha = \{0.1, 1.0, 10.0\}$ and $\beta=1\%$.}
\resizebox{.63\textwidth}{!}{%
\begin{tabular}{r|ccc|ccc}
\toprule
$\alpha$ & $0.1$     & $1.0$ & $10.0$ & $0.1$     & $1.0$ & $10.0$ \\
\midrule
Benchmark &  Time (s) & Time (s) & Time (s) & Opt. & Opt. & Opt. \\
\midrule
case14\_ieee      &   0.71 &   0.58 & 1.01 & 10.22 & 4.40 & 6.10\\
case24\_ieee\_rts &   1.52 &   1.40 & 2.30 & 8.90 & 4.14 & 5.10\\
case29\_edin      &   0.67 &   36.84 & 23.50 & 1.00 & 40.36 & 27.78\\
case30\_as        &   0.90 &   0.90 & 0.85 & 4.62 & 4.28 & 3.80\\
case30\_fsr       &   1.28 &   1.07 & 0.66 & 6.24 & 4.70 & 2.82\\
case30\_ieee      &   0.84 &   1.22 & 1.33 & 5.14 & 5.86 & 6.06\\
case39\_epri      &   1.16 &   6.92 & 1.29 & 6.32 & 21.50 & 3.14\\
case57\_ieee      &   1.77 &   4.22 & 10.31 & 4.98 & 3.72 & 7.94\\
case73\_ieee\_rts  &  6.20 &   2.17 & 11.57$^{1}$ & 9.88 & 1.62 & 66.54\\
case89\_pegase    &   9.23 &   13.56 & 17.60 & 6.04 & 5.26 & 7.82\\
case118\_ieee     &   10.15&   10.71 & 9.77 & 10.08 & 5.96 & 6.42\\
case162\_ieee\_dtc &  10.04 &  35.18 & 53.72 & 4.88 & 9.68 & 8.82\\
case189\_edin     &   2.98 &   53.40 & 58.08 & 2.16 & 7.98 &8.18\\
case240\_wecc     &   12.34 & 117.78 & 148.16 & 1.16 & 10.88 & 10.10\\
case300\_ieee     &   1621.78& 360.99 & 301.32 & 8.90 & 7.98 & 10.32\\
case1354\_pegase  &   15.90 & 2172.23 & 2363.19 & 1.16 & 7.68 & 9.10\\
case1394sop\_eir  &   70.78 &  777.66 &  1427.87 & 3.72 & 6.12 & 5.40\\
case1397sp\_eir   &   146.47 & 1298.15 & 660.12 & 3.90 & 6.40 & 4.60\\
case1460wp\_eir   &   157.06 & 1418.58 & 728.84 & 4.72 & 6.90 & 5.66\\
\bottomrule
\end{tabular}
}
\label{tbl:power-time}
\end{table*}
\begin{table*}[t]
\centering
\caption{OGF: Average CPU times and number of optimizations (50 runs) for parameters $\alpha = \{0.1, 1.0, 2.0\}$ and $\beta=1\%$.}
\resizebox{.55\textwidth}{!}{%
\begin{tabular}{r|ccc|ccc}
\toprule
$\alpha$ & $0.1$     & $1.0$ & $2.0$ & $0.1$     & $1.0$ & $2.0$ \\
\midrule
Benchmark &  Time (s) & Time (s) & Time (s) & Opt. & Opt. & Opt. \\
\midrule
24-pipe    &      2.34 & 0.67 & 1.77 & 20.56 & 6.20 & 6.22\\
GasLib-40  &     34.74 & 141.16 & 132.65 & 26.50 & 33.48 & 28.82\\
GasLib-135 &     21.63 & 28.91 & 214.57 & 12.96 & 8.74 & 25.80\\
\bottomrule
\end{tabular}
}
\label{tbl:gas-time}
\end{table*}
\begin{table*}[t]
\centering
\caption{OPF test cases: Average OPF cost difference (in \%) for parameters $\alpha = 0.1$ and $\beta=\{10\%, 1\%, 0.1\%\}$.}
\resizebox{.50\textwidth}{!}{%
\begin{tabular}{r|ccc|ccc}
\toprule
 &  \multicolumn{3}{c|}{Bi-level} &  \multicolumn{3}{c}{High-point}\\
\midrule
$\beta (\%)$ & $10\%$     & $1\%$ & $0.1\%$ & $10\%$     & $1\%$ & $0.1\%$ \\
\midrule
case14\_ieee      &       0.1 &       0.1 &       0.0 &      -4.9 &      \bf{-6.2} &      \bf{-6.3} \\
case24\_ieee\_rts  &       0.0 &       0.4 &       0.1 &      -0.1 &      \bf{-1.6} &      \bf{-1.9} \\
case29\_edin      &      -0.0 &      -0.0 &       0.0 &      -0.0 &      -0.0 &      -0.1 \\
case30\_as        &       0.7 &       0.3 &      -0.1 &      -1.1 &      \bf{-2.3} &      \bf{-2.4} \\
case30\_fsr       &      -0.1 &       0.3 &      -0.1 &      -6.2 &      \bf{-6.8} &      \bf{-6.9} \\
case30\_ieee      &      -0.3 &       0.1 &       0.0 &      -2.7 &      \bf{-1.8} &      \bf{-1.6} \\
case39\_epri      &      -0.2 &      -0.0 &       0.0 &      -0.2 &      -0.4 &      \bf{-0.7} \\
case57\_ieee      &       1.5 &       0.4 &       0.1 &       1.4 &      -0.6 &      \bf{-1.0} \\
case73\_ieee\_rts  &      -0.4 &       0.1 &       0.0 &      -0.4 &      \bf{-1.1} &      \bf{-1.4} \\
case89\_pegase    &      -0.3 &      -0.1 &       0.0 &      -0.3 &      -0.4 &      \bf{-0.6} \\
case118\_ieee     &      -0.0 &       0.3 &       0.0 &      -0.0 &      \bf{-1.6} &      \bf{-2.0} \\
case162\_ieee\_dtc &       2.6 &       0.5 &       0.1 &       2.3 &      -0.1 &      \bf{-0.7} \\
case189\_edin     &       9.2 &       1.0 &       0.1 &       9.2 &       0.8 &      -0.1 \\
case240\_wecc     &       0.4 &       0.1 &       0.0 &       0.4 &       0.1 &      \bf{-0.2} \\
case300\_ieee     &       8.9 &       0.9 &       0.1 &       8.8 &       0.9 &       0.1 \\
case1354\_pegase  &       0.0 &       0.0 &       0.0 &       0.0 &       0.0 &      -0.1 \\
case1394sop\_eir  &       9.8 &       1.0 &       0.1 &      \bf{10.3} &       \bf{1.2} &       \bf{0.3} \\
case1397sp\_eir   &      10.0 &       1.0 &       0.1 &      \bf{10.5} &       \bf{2.0} &       \bf{0.7} \\
case1460wp\_eir   &       9.8 &       1.0 &       0.1 &      \bf{14.6} &       \bf{1.5} &       \bf{0.7} \\
\bottomrule
\end{tabular}
}
\label{tbl:power-opf}
\end{table*}

\begin{figure}[t]
	\centering
	\includegraphics[width=.40\textwidth]{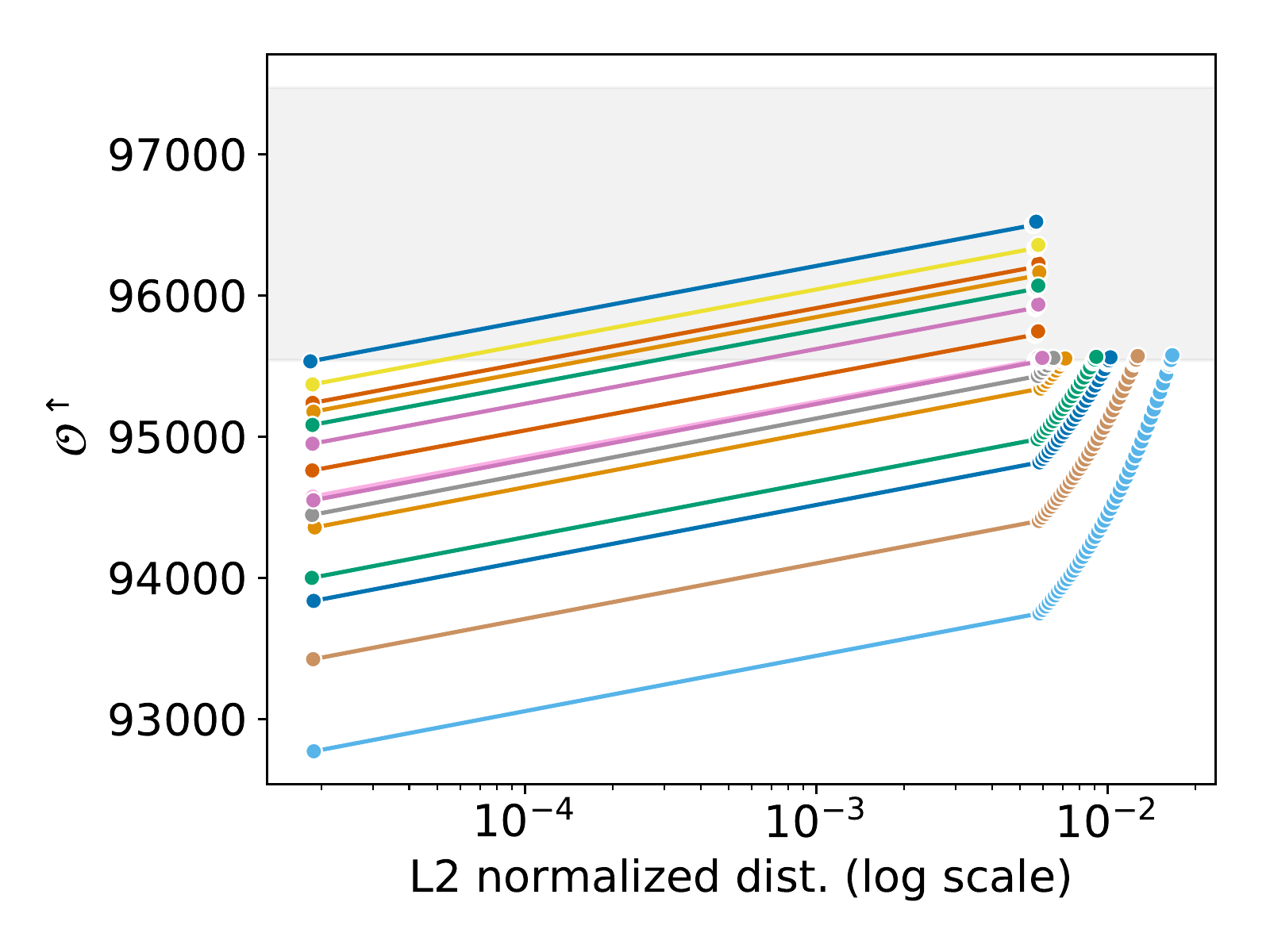}
	\includegraphics[width=.40\textwidth]{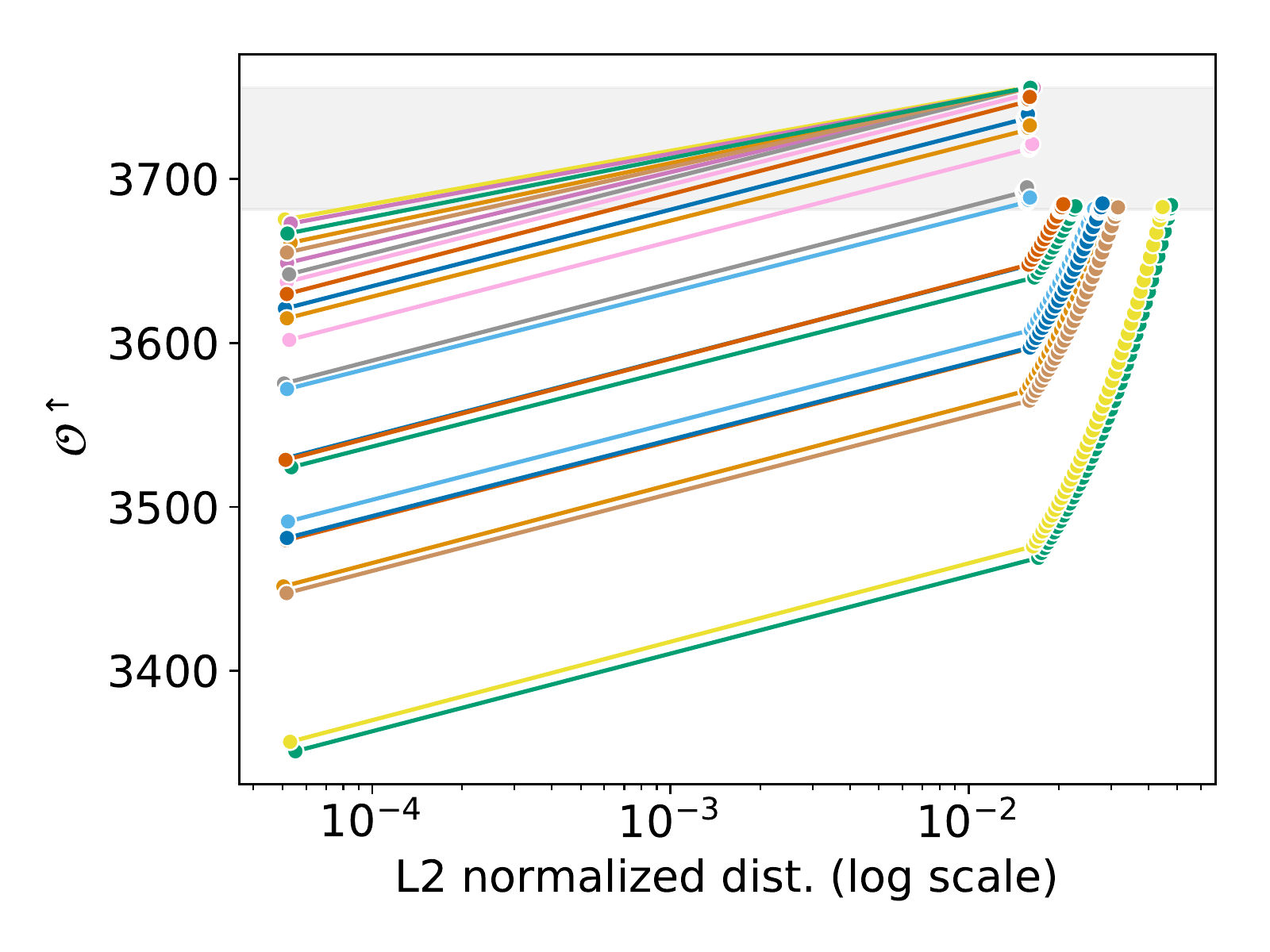}
	\caption{IEEE-39 (left) IEEE-118 (right): OPF cost as a function of the L$^2$-distance to $\tilde{\delta}$ for parameters $\alpha = 0.1$ and $\beta= 1\%$.} 
	\label{fig:power-mono}
\end{figure}
\begin{figure}[t]
	\centering
	\includegraphics[width=.40\textwidth]{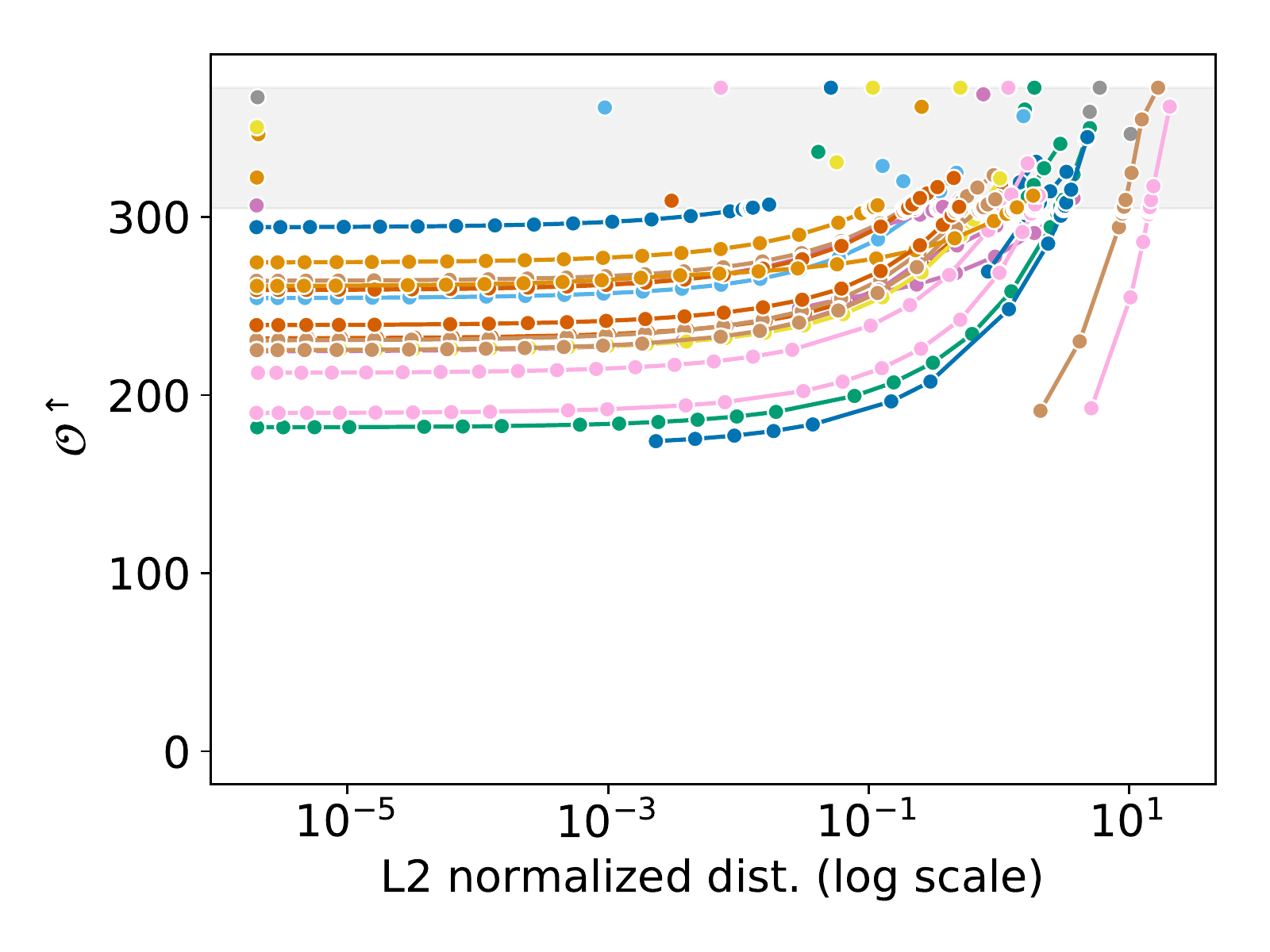}
	\includegraphics[width=.40\textwidth]{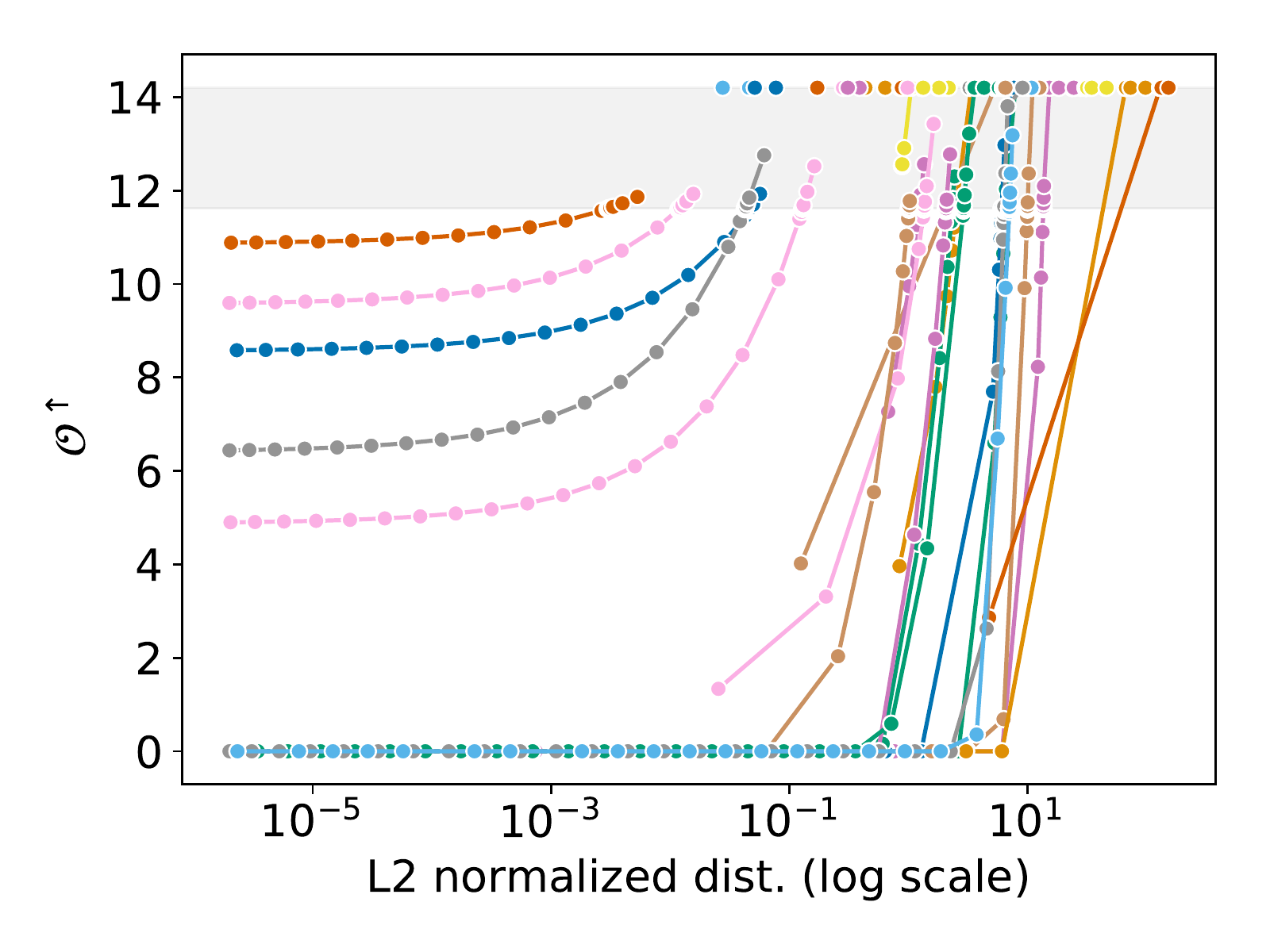}
	\caption{24-pipe (left) Gaslib-40 (right): OGF cost as a function of the L$^2$-distance to $\tilde{\delta}$ for parameters $\alpha = 0.1$ and $\beta= 10\%$.} 
	\label{fig:gas-pipe-mono}
\end{figure}

\paragraph{Convergence and Computational Efficiency}

The experimental results demonstrate the robustness and scalability of
the approach.  Tables~\ref{tbl:power-time} and~\ref{tbl:gas-time}
depict the average CPU times (in seconds) and average number of calls
to ${\cal O}^{\uparrow}$ (and thus ${\cal O}$) also over 50 runs. All
instances (except one\footnote{The failure to converge on 
 IEEE-73 (marked with a superscript) is due to a
  poor starting point from the HPR.})
converge with a very small number of iterations. Even large-scale test
cases with more than $10^4$ variables are solved in a few iterations.
\emph{The bilevel model is thus a truly practical approach to demand
  obfuscation of these networks}.


\paragraph{Monotonicity}

Figures~\ref{fig:power-mono} and~\ref{fig:gas-pipe-mono} illustrate
the interpolation of the optimal values ${\cal O}^{\uparrow}$ (y-axis) with the respect to their distances to $\tilde{\delta}$ (x-axis) obtained while solving the bilevel model. The shaded area shows the feasible cost range. The results summarize 50 runs (each with a different random seed), each represented by a colored curve. \emph{As can be
  seen, the monotonicity property holds in these real networks}. Note
also the single points in the gas plot: These are cases where the
high-point relaxation can be made optimal in one iteration.

\paragraph{The Benefits of the Bilevel Model}

Tables \ref{tbl:power-opf} and \ref{tbl:gas-opf} show the benefits of
the bilevel model compared to the HPR. The HPR returns a solution
$d^h$ in ${\cal N}$ with the smallest distance to $\tilde{d}$.
However, it is typically the case that ${\cal O}(d^h) < \tilde{f} -
\beta$. The tables report the relative distance of ${\cal O}(d^*)$ and
${\cal O}(d^h)$ to  $\tilde{f}$ for various values of $\beta$.
Numbers are bold for instances violating the $\beta$ thresholds.
Table \ref{tbl:gas-opf} shows that, on the gas networks, the bilevel model
produces several orders of magnitude improvements over the HPR. The
gains are less pronounced on the electricity systems, but remain
substantial. Note that obfuscation methods with a few percents of cost difference mean
an obfuscation error of hundreds of millions of dollars in energy systems.


\begin{table*}[h]
\centering
\caption{OGF test cases: Average OGF cost differences (in \%) for parameters $\alpha = 0.1$ and $\beta=\{10\%, 1\%, 0.1\%\}$.}
\resizebox{.45\textwidth}{!}{%
\begin{tabular}{r|ccc|ccc}
\toprule
 &  \multicolumn{3}{c|}{Bi-level} &  \multicolumn{3}{c}{High-point}\\
\midrule
$\beta (\%)$ & $10\%$     & $1\%$ & $0.1\%$ & $10\%$     & $1\%$ & $0.1\%$ \\
\midrule
24-pipe    &-3.1&-0.3&    0.0 &\bf{-12.1}&\bf{-13.7}&   \bf{-14.1}\\
gaslib-40  &2.5&0.3&     0.0 &\bf{-31.5}&\bf{-38.1}&   \bf{-39.0}\\
gaslib-135 &1.3&0.2&     0.0 &\bf{-37.3}&\bf{-38.4}&   \bf{-41.6}\\
\bottomrule
\end{tabular}
}
\label{tbl:gas-opf}
\end{table*}

\begin{table*}[!h]
\centering
\caption{OPF test cases: Average distance (L2, normalized to original scale) between obfuscated and original solutions for parameters $\alpha = \{0.1,1.0,10.0\}$ and $\beta=1\%$.}
\resizebox{.70\textwidth}{!}{%
\begin{tabular}{r|ccc|ccc|ccc}
\toprule
 &  \multicolumn{3}{c|}{Bi-level} &  \multicolumn{3}{c|}{High-point} & \multicolumn{3}{c}{Laplace}\\
\midrule
$\alpha$ (p.u.) & $0.1$     & $1.0$ & $10.0$ & $0.1$     & $1.0$ & $10.0$ & $0.1$     & $1.0$ & $10.0$\\
\midrule
case14\_ieee      &0.67&     4.48 &    10.32 &0.67&    4.51 &    10.60 &    0.71 &    7.08 &    70.82 \\
case24\_ieee\_rts  &0.13&     1.11 &     3.61 &0.13&    1.11 &    3.63 &    0.13 &    1.34 &    13.39 \\
case29\_edin      &0.01&     0.09 &     0.82 &0.01&    0.09 &    0.82 &    0.01 &    0.09 &    0.87 \\
case30\_as        &0.84&     3.34 &     4.45 &0.84&    3.35 &    4.49 &    0.97 &    9.74 &    97.43 \\
case30\_fsr       &1.42&     5.47 &     7.40 &1.43&    5.49 &    7.47 &    1.66 &    16.61 &   166.14 \\
case30\_ieee      &0.88&     3.86 &     5.46 &0.88&    3.88 &    5.60 &    0.97 &    9.74 &    97.43 \\
case39\_epri      &0.06&     0.60 &     2.44 &0.06&    0.60 &    2.44 &    0.06 &    0.62 &    6.24 \\
case57\_ieee      &0.34&     2.38 &     6.64 &0.34&    2.42 &    7.56 &    0.35 &    3.51 &    35.14 \\
case73\_ieee\_rts  &0.13&     1.11 &     3.65 &0.13&    1.11 &    3.67 &    0.13 &    1.32 &    13.19 \\
case89\_pegase    &0.06&     0.53 &     2.64 &0.06&    0.53 &    2.86 &    0.06 &    0.56 &    5.60 \\
case118\_ieee     &0.40&     3.03 &     6.85 &0.40&    3.04 &    6.87 &    0.40 &    3.99 &    39.89 \\
case162\_ieee\_dtc &0.09&     0.68 &     1.59 &0.10&    0.69 &    1.61 &    0.10 &    0.96 &    9.62 \\
case189\_edin     &0.34&     2.05 &     3.92 &0.34&    2.07 &    4.18 &    0.35 &    3.50 &    35.05 \\
case240\_wecc     &0.01&     0.12 &     0.99 &0.01&    0.12 &    1.01 &    0.01 &    0.12 &    1.23 \\
case300\_ieee     &0.10&     0.85 &     3.69 &0.10&    0.86 &    4.46 &    0.11 &    1.07 &    10.65 \\
case1354\_pegase  &0.14&     1.26 &     5.47 &0.14&    1.26 &    6.21 &    0.14 &    1.36 &    13.59 \\
\bottomrule
\end{tabular}
}
\label{tbl:power-dist}
\end{table*}
\begin{table*}[!h]
\centering
\caption{OGF test cases: Average distance (L2, normalized to original scale) between obfuscated and original solutions for parameters  $\alpha = \{0.1,0.4,1.0\}$ and $\beta=1\%$.}
\resizebox{.60\textwidth}{!}{%
\begin{tabular}{r|ccc|ccc|ccc}
\toprule
 &  \multicolumn{3}{c|}{Bi-level} &  \multicolumn{3}{c|}{High-point} & \multicolumn{3}{c}{Laplace} \\
\midrule
$\alpha$ (p.u.) & $0.1$     & $0.4$ & $1.0$ & $0.1$     & $0.4$ & $1.0$ &  $0.1$     & $0.4$ & $1.0$ \\
\midrule
24-pipe    &          0.32 &    1.24& 2.68 &    0.34 & 1.29 & 2.71 &0.36  & 1.43 & 3.58\\
gaslib-40  &          2.04 &    7.66& 22.13 &    1.95 & 7.51& 16.54 &2.01 & 8.03 & 20.07\\
gaslib-135 &          0.72 &    2.78& 7.30 &    0.72 & 2.75 & 6.46 &0.72  & 2.88 & 7.19\\
\bottomrule
\end{tabular}
}
\label{tbl:gas-dist}
\end{table*}
\paragraph{Obfuscation Quality}

Tables~\ref{tbl:power-dist} and~\ref{tbl:gas-dist} report the
L$^2$-distances to the original loads for the high-point relaxation,
the bilevel model, and the load vector $\tilde{d}$ (which typically
produces infeasible problems).  The results are averaged over 50 runs.
The results show that the fidelity of the bilevel model (i.e., how
close $d^*$ is to $d^o$) is extremely high, and often improves over
the fidelity of the HPR. Finally, the bilevel model has a much higher
fidelity than the Laplace mechanism.

\section{Conclusion}

This paper presented a bilevel optimization model for postprocessing
the differentially private input of a constrained optimization
problem. The model restores the feasibility and near-optimality of the
optimization problem. The paper shows that the bilevel model can be
solved effectively under a natural monotonicity assumption by
alternating the solving of the follower problem and the solving of a
novel optimization model that maximizes a proxy of the true
objective. Experimental results on large-scale nonconvex constrained
optimization problems with more than $10^4$ variables demonstrate the
accuracy, efficiency, and benefits of the approach. They also validate
the monotonicity assumptions empirically. Future work will be devoted
to understanding and characterizing theoretically the solution space
around optimal solutions.

\section*{Acknowledgement}
The authors would like to thank Kory Hedman for extensive discussions on
various obfuscation techniques. This research is partly funded by the ARPA-E
Grid Data Program under Grant 1357-1530.

\newpage


\bibliographystyle{IEEEtran}
\bibliography{cpaior,opf_bib,differential_privacy}

\begin{thebibliography}{10}
\providecommand{\url}[1]{#1}
\csname url@samestyle\endcsname
\providecommand{\newblock}{\relax}
\providecommand{\bibinfo}[2]{#2}
\providecommand{\BIBentrySTDinterwordspacing}{\spaceskip=0pt\relax}
\providecommand{\BIBentryALTinterwordstretchfactor}{4}
\providecommand{\BIBentryALTinterwordspacing}{\spaceskip=\fontdimen2\font plus
\BIBentryALTinterwordstretchfactor\fontdimen3\font minus
  \fontdimen4\font\relax}
\providecommand{\BIBforeignlanguage}[2]{{%
\expandafter\ifx\csname l@#1\endcsname\relax
\typeout{** WARNING: IEEEtran.bst: No hyphenation pattern has been}%
\typeout{** loaded for the language `#1'. Using the pattern for}%
\typeout{** the default language instead.}%
\else
\language=\csname l@#1\endcsname
\fi
#2}}
\providecommand{\BIBdecl}{\relax}
\BIBdecl

\bibitem{dwork:06}
C.~Dwork, F.~McSherry, K.~Nissim, and A.~Smith, ``Calibrating noise to
  sensitivity in private data analysis,'' in \emph{TCC}, vol. 3876.\hskip 1em
  plus 0.5em minus 0.4em\relax Springer, 2006, pp. 265--284.

\bibitem{li2010optimizing}
C.~Li, M.~Hay, V.~Rastogi, G.~Miklau, and A.~McGregor, ``Optimizing linear
  counting queries under differential privacy,'' in \emph{Proceedings of the
  twenty-ninth ACM SIGMOD-SIGACT-SIGART symposium on Principles of database
  systems}.\hskip 1em plus 0.5em minus 0.4em\relax ACM, 2010, pp. 123--134.

\bibitem{abowd2018us}
J.~M. Abowd, ``The us census bureau adopts differential privacy,'' in
  \emph{Proceedings of the 24th ACM SIGKDD International Conference on
  Knowledge Discovery \& Data Mining}.\hskip 1em plus 0.5em minus 0.4em\relax
  ACM, 2018, pp. 2867--2867.

\bibitem{Fioretto:cp-19}
F.~Fioretto and P.~{Van Hentenryck}, ``Differential privacy of hierarchical
  census data: An optimization approach,'' in \emph{Principles and Practice of
  Constraint Programming - 25th International Conference, {CP}}, 2019, pp.
  639--655.

\bibitem{chaudhuri:2011}
K.~Chaudhuri, C.~Monteleoni, and A.~D. Sarwate, ``Differentially private
  empirical risk minimization,'' \emph{Journal of Machine Learning Research},
  vol.~12, no. Mar, pp. 1069--1109, 2011.

\bibitem{abadi2016deep}
M.~Abadi, A.~Chu, I.~Goodfellow, H.~B. McMahan, I.~Mironov, K.~Talwar, and
  L.~Zhang, ``Deep learning with differential privacy,'' in \emph{Proceedings
  of the 2016 ACM SIGSAC Conference on Computer and Communications
  Security}.\hskip 1em plus 0.5em minus 0.4em\relax ACM, 2016, pp. 308--318.

\bibitem{mak19privacy}
T.~Mak, F.~Fioretto, L.~Shi, and P.~{Van Hentenryck}, ``Privacy-preserving
  power system obfuscation: A bilevel optimization approach,'' \emph{IEEE
  Transactions on Power Systems}, vol.~35, no.~2, pp. 1627--1637, 2020.

\bibitem{dwork:13}
C.~Dwork and A.~Roth, ``The algorithmic foundations of differential privacy,''
  \emph{Theoretical Computer Science}, vol.~9, no. 3-4, pp. 211--407, 2013.

\bibitem{vadhan:17}
S.~Vadhan, ``The complexity of differential privacy,'' in \emph{Tutorials on
  the Foundations of Cryptography}.\hskip 1em plus 0.5em minus 0.4em\relax
  Springer, 2017, pp. 347--450.

\bibitem{acs:11}
G.~{\'A}cs and C.~Castelluccia, ``I have a dream!(differentially private smart
  metering).'' in \emph{Information hiding}, vol. 6958.\hskip 1em plus 0.5em
  minus 0.4em\relax Springer, 2011, pp. 118--132.

\bibitem{zhao:14}
J.~Zhao, T.~Jung, Y.~Wang, and X.~Li, ``Achieving differential privacy of data
  disclosure in the smart grid,'' in \emph{INFOCOM, 2014 Proceedings
  IEEE}.\hskip 1em plus 0.5em minus 0.4em\relax IEEE, 2014, pp. 504--512.

\bibitem{liao:17}
X.~Liao, P.~Srinivasan, D.~Formby, and A.~R. Beyah, ``Di-prida: Differentially
  private distributed load balancing control for the smart grid,'' \emph{IEEE
  Transactions on Dependable and Secure Computing}, 2017.

\bibitem{halder17architecture}
A.~{Halder}, X.~{Geng}, P.~R. {Kumar}, and L.~{Xie}, ``Architecture and
  algorithms for privacy preserving thermal inertial load management by a load
  serving entity,'' \emph{IEEE Transactions on Power Systems}, vol.~32, no.~4,
  pp. 3275--3286, July 2017.

\bibitem{zhou2019}
F.~{Zhou}, J.~{Anderson}, and S.~H. {Low}, ``Differential privacy of aggregated
  dc optimal power flow data,'' in \emph{2019 American Control Conference
  (ACC)}, 2019, pp. 1307--1314.

\bibitem{karapetyan:17}
\BIBentryALTinterwordspacing
A.~Karapetyan, S.~K. Azman, and Z.~Aung, ``Assessing the privacy cost in
  centralized event-based demand response for microgrids,'' \emph{CoRR}, vol.
  abs/1703.02382, 2017. [Online]. Available:
  \url{http://arxiv.org/abs/1703.02382}
\BIBentrySTDinterwordspacing

\bibitem{fioretto:AAMAS-18}
F.~Fioretto, C.~Lee, and P.~{Van~Hentenryck}, ``Constrained-based differential
  privacy for private mobility,'' in \emph{Proceedings of the International
  Joint Conference on Autonomous Agents and Multiagent Systems {(AAMAS)}},
  2018, pp. 1405--1413.

\bibitem{zhang2016dynamic}
T.~Zhang and Q.~Zhu, ``Dynamic differential privacy for {ADMM}-based
  distributed classification learning,'' \emph{IEEE Transactions on Information
  Forensics and Security}, vol.~12, no.~1, pp. 172--187, 2016.

\bibitem{huang2019dp}
Z.~{Huang}, R.~{Hu}, Y.~{Guo}, E.~{Chan-Tin}, and Y.~{Gong}, ``{DP-ADMM}:
  {ADMM}-based distributed learning with differential privacy,'' \emph{IEEE
  Transactions on Information Forensics and Security}, 2019.

\bibitem{ding2019optimal}
J.~Ding, Y.~Gong, M.~Pan, and Z.~Han, ``Optimal differentially private {ADMM}
  for distributed machine learning,'' \emph{arXiv preprint arXiv:1901.02094},
  2019.

\bibitem{mak20privacy}
T.~W. {Mak}, F.~{Fioretto}, and P.~{Van Hentenryck}, ``Privacy-preserving
  obfuscation for distributed power systems,'' \emph{Electric Power Systems
  Research}, vol. 189, p. 106718, 2020.

\bibitem{chatzikokolakis2013broadening}
K.~Chatzikokolakis, M.~E. Andr{\'e}s, N.~E. Bordenabe, and C.~Palamidessi,
  ``Broadening the scope of differential privacy using metrics,'' in
  \emph{International Symposium on Privacy Enhancing Technologies
  Symposium}.\hskip 1em plus 0.5em minus 0.4em\relax Springer, 2013, pp.
  82--102.

\bibitem{dwork2011firm}
C.~Dwork, ``A firm foundation for private data analysis,'' \emph{Communications
  of the ACM}, vol.~54, no.~1, pp. 86--95, 2011.

\bibitem{fioretto:18b}
F.~Fioretto and P.~Van~Hentenryck, ``Constrained-based differential privacy:
  Releasing optimal power flow benchmarks privately,'' in \emph{Proceedings of
  Integration of Constraint Programming, Artificial Intelligence, and
  Operations Research (CPAIOR)}, 2018, pp. 215--231.

\bibitem{koufogiannis:15}
F.~Koufogiannis, S.~Han, and G.~J. Pappas, ``Optimality of the laplace
  mechanism in differential privacy,'' \emph{arXiv preprint arXiv:1504.00065},
  2015.

\bibitem{hansen:92}
P.~Hansen, B.~Jaumard, and G.~Savard, ``New branch-and-bound rules for linear
  bilevel programming,'' \emph{SIAM J.~Sci.~and Stat.~Comput.}, vol.~13, no.~5,
  pp. 1194--1217, 1992.

\bibitem{vicente:94}
L.~Vicente, G.~Savard, and J.~J{\'u}dice, ``Descent approaches for quadratic
  bilevel programming,'' \emph{J.~of optimization theory and applications},
  vol.~81, no.~2, pp. 379--399, 1994.

\bibitem{Coffrin14Nesta}
\BIBentryALTinterwordspacing
C.~Coffrin, D.~Gordon, and P.~Scott, ``Nesta, the {NICTA} energy system test
  case archive,'' \emph{CoRR}, vol. abs/1411.0359, 2014. [Online]. Available:
  \url{http://arxiv.org/abs/1411.0359}
\BIBentrySTDinterwordspacing

\bibitem{mak19dynamic}
T.~W.~K. Mak, P.~V. Hentenryck, A.~Zlotnik, and R.~Bent, ``Dynamic compressor
  optimization in natural gas pipeline systems,'' \emph{INFORMS Journal on
  Computing}, vol.~31, no.~1, pp. 40--65, 2019.

\bibitem{pfetsch2015validation}
M.~Pfetsch, A.~F{\"u}genschuh, B.~Gei{\"s}ler, N.~Gei{\"s}ler, R.~Gollmer,
  B.~Hiller, J.~Humpola, T.~Koch, T.~Lehmann, A.~Martin, A.~Morsi,
  J.~R{\"o}vekamp, L.~Schewe, M.~Schmidt, R.~Schultz, R.~Schwarz, J.~Schweiger,
  C.~Stangl, M.~Steinbach, S.~Vigerske, and B.~Willert, ``Validation of
  nominations in gas network optimization: Models, methods, and solutions,''
  \emph{Optimization Methods and Software}, vol.~30, no.~1, pp. 15--53, 2015.

\bibitem{Coffrin:18}
C.~Coffrin, R.~Bent, K.~Sundar, Y.~Ng, and M.~Lubin, ``Powermodels.jl: An
  open-source framework for exploring power flow formulations,'' in
  \emph{PSCC}, June 2018, pp. 1--8.

\bibitem{wachter06on}
A.~W\"achter and L.~T. Biegler, ``On the implementation of an interior-point
  filter line-search algorithm for large-scale nonlinear programming,''
  \emph{Mathematical Programming}, vol. 106, no.~1, pp. 25--57, 2006.

\end{thebibliography}

\end{document}